\newcommand\NN{{\mathbb N}}
\newcommand\w{{\omega}}
\newcommand\BB{{\mathcal B}}
\newcommand\II{{\mathcal I}}
\newcommand\PP{{\mathcal P}}
\newcommand\UU{{\mathcal U}}
\newcommand\VV{{\mathcal V}}
\newtheorem{Th}{Theorem}
\newtheorem{Lm}{Lemma}
\theoremstyle{definition}
\newtheorem{Ex}{Example}
\newtheorem{Rm}{Remark}
\newtheorem{Pb}{Problem}
\begin{document}

\pagestyle{empty}
\title{Asymptotically Scattered Spaces}
\author{Igor Protasov}
\date{}
\maketitle

\begin{abstract}
We define thin and asymptotically scattered metric spaces as asymptotic counterparts of discrete and scattered metric spaces respectively. We characterize asymptotically scattered spaces in terms of prohibited subspaces, and classify thin metric spaces up to coarse equivalence. We introduce the types of asymptotically scattered spaces and construct the spaces of distinct types. We transfer the notion of asymptotically scattered spaces to $G$-spaces and characterize asymptotically scattered spaces of groups.
\end{abstract}

\section{Introduction}

A metric space $(X,d)$ is called {\em thin} (or {\em asymptotically discrete}) if, for every $m\in\w=\{0,1,...\}$, there exists a bounded subset $V$ of $X$ such that $B(x,m)=\{x\}$ for each $x\in X\setminus V$. Here $B(x,m)=\{y\in X: d(x,y)\leq m\}$. A subset $V$ is bounded if $V \subseteq B(x_0,n)$, for some $x_0\in X$, $n\in \w$. We say that a subset $Y$ of $X$ is {\em thin} if the metric space $(Y,d|_Y)$ is thin. Clearly, each bounded subset of a metric space is thin, and each subset of a thin metric space is thin.

Following \cite{b1}, we say that a subset $Y$ of a metric space $(X,d)$ has {\em asymptotically isolated $m$-balls} if there exists a sequence $(y_n)_{n\in\w}$ in $Y$ and an increasing sequence $(m_n)_{n\in\w}$ in $\w$ such that $m_0>m$ and $B(x_n,m_n)\setminus B(x_n,m)=\varnothing$ for each $n\in\w$. If $Y$ has asymptotically isolated $m$-balls for some $m\in\w$, we say that $Y$ has {\em asymptotically isolated balls}.

The following definition was suggested by T. Banakh in a private conversation. A metric space $(X,d)$ is called {\em asymptotically scattered} if each unbounded subset of $X$ has asymptotically isolated balls. We say that a subset $Y$ of $X$ is asymptotically scattered if the metric space $(Y,d_Y)$ is asymptotically scattered. Clearly each thin metric space is asymptotically scattered, and each subset of an asymptotically scattered metric space is asymptotically scattered.

The  asymptotically scattered metric spaces can be considered as counterparts of scattered metric spaces. Recall that a topological space $X$ is scattered if each non-empty subset $Y$ of $X$ has a point isolated in $Y$.

Given two metric spaces $(X_1,d_1)$ and $(X_2,d_2)$, a bijection $f: X_1\to X_2$ is said to be an {\em asymorphism} if there are two sequences $(c_n)_{n\in\w}$ and $(c_n')_{n\in\w}$ in $\w$ such that, for each $n\in\w$ and $x,y\in X_1$,
$$d_1(x,y)\leq n\Rightarrow d_2(f(x),f(y))<c_n,$$
$$d_2(f(x),f(y))\leq n\Rightarrow d_2(x,y)<c_n'.$$
We note \cite[Theorem 2.1.1]{b9} that each metric space $(X,d)$ is asymorphic to a metric space $(X',d')$ such that $d'$ takes values in $\w$. In what follows all metrics under consideration are supposed to be integer valued.

A subset $L$ of a metric space $(X,d)$ is called {\it large} if there exists $m\in\w$ such that $B(L,m)=X$.

The metric spaces $(X_1,d_1)$, $(X_2,d_2)$ are called {\em coarsely equivalent} if there are large subsets $L_1\subseteq X_1$ and $L_2\subseteq X_2$ such that the metric spaces $(L_1,d_1)$ and $(L_2,d_2)$ are asymorphic.

We say that a property $\PP$ of metric spaces is {\em asymptotic} (resp. {\em coarse}) if $\PP$ is stable  under asymorphism (resp. coarse equivalence). It is easy to see that "thin" is an asymptotic but not coarse property, and "asymptotic scattered" is a coarse property.

A metric space $(X,d)$ is of {\em bounded geometry} if there exists $m\in\w$ and a function $c:\w\to\w$ such that an $m$-capacity of every ball $B(x,n)$ does not exceed $c(n)$. An {\em $m$-capacity} of a subset $Y$ of $X$ is supremum of cardinalities of $m$-discrete subsets of $Y$. A subset $Z$ is {\em $m$-discrete} if $d(x,y)>m$ for all distinct $x,y\in Z$.

A metric space $(X,d)$ is called {\em uniformly locally finite} if there is a function $c:\w\to\w$ such that $|B(x,n)|\leq c(n)$ for each $x\in X$ and $n\in\w$. It is easy to see \cite[Proposition 2]{b8} that $(X,d)$ is of bounded geometry if and only if $(X,d)$ is coarsely equivalent to some uniformly locally finite metric space.

Recall that a metric $d$ on a set $X$ is called an {\it ultrametric} if $d(x,y)\leq \max\{d(x,z),d(y,z)\}$ for all $x,y,z\in X$. By \cite[Theorem 3.11]{b2}, every ultrametric space of bounded geometry is coarsely equivalent to some subset of the Cantor macro-cube
$$2^{<\NN}=\{(x_i)_{i\in\NN}\in\{0,1\}^\NN:\exists n\in\NN\text{ }\forall m>n\text{ } x_m=0\}$$
endowed with the ultrametric
$$d((x_i)_{i\in\NN},(y_i)_{i\in\NN})=\max\{n\in\NN:x_n\neq y_n\}.$$
By \cite{b1}, an ultrametric space of bounded geometry without asymptotically isolated balls is coarsely equivalent to $2^{<\NN}$. However \cite{b7}, there are $\mathfrak{c}$ pairwise non-asymorphic uniformly locally finite ultrametric spaces.

In section~\ref{s2} we characterize the asymptotically scattered spaces in terms of prohibited subspaces. In section~\ref{s3} we classify the thin spaces up to coarse equivalence. In section~\ref{s4} we introduce the types of scattered spaces and construct the scattered spaces of distinct types. In sections~\ref{s5} and ~\ref{s6} we transfer the notion of asymptotically scattered metric spaces to $G$-spaces and characterize the asymptotically scattered subsets of a group.

\section{Characterizations}~\label{s2}
For a metric space $(X,d)$ and $m\in\w$, a sequence $x_1,...,x_n$ in $X$ is called an {\em $m$-chain} if $d(x_i,x_{i+1})\leq m$ for each $i\in\{1,...,n-1\}$. We set
$$B^\square(x,m)=\{y\in X:\text{ there is an $m$-chain between $x$ and $y$}\}$$
and say that $X$ is cellular if, for each $m\in\w$, there is $c_m\in\w$ such that $B^\square(x,m)\subseteq B(x,c_m)$ for every $x\in X$. By \cite[Theorems 3.1.1 and 3.1.3]{b9}, the following three conditions are equivalent: $(X,d)$ is cellular, $asdim(X)=0$, $(X,d)$ is asymorphic to some ultrametric space.

Let $(X,d)$ be an unbounded metric space. We define a function $h:X\times\omega\to\omega$ by
$$h(x,m)=\min\{n:B(x,n)\setminus B(x,m)\neq\varnothing\},$$
and note that $(X,d)$ is asymptotically scattered if and only if, for each unbounded subset $Y$ of $X$, there exist $m\in\w$ and an unbounded subset $Z\subseteq Y$ such that the set $\{h(x,m):x\in Z\}$ is infinite.
\begin{Th}~\label{t21} Every asymptotically scattered metric space $(X,d)$ is cellular. \end{Th}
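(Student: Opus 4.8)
The plan is to prove the contrapositive: if $(X,d)$ is not cellular, then $(X,d)$ is not asymptotically scattered. So suppose $X$ fails to be cellular. Then there is some fixed $m\in\w$ such that for every $c\in\w$ there exists a point $x\in X$ with $B^\square(x,m)\not\subseteq B(x,c)$; equivalently, there is an $m$-chain starting at $x$ that leaves the ball $B(x,c)$. Using this, I would extract a sequence of "long $m$-chains": points $x_n\in X$ and $m$-chains $C_n$ from $x_n$ to some $y_n$ with $d(x_n,y_n)\geq n$ (say). The rough idea is that such long $m$-chains densely fill out annuli around their starting points, so the subspace they sweep out cannot have asymptotically isolated balls.

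The key step is to turn these chains into a genuine unbounded subspace $Y\subseteq X$ witnessing the failure of asymptotic scatteredness. To do this I would, for each $n$, walk along the $m$-chain $C_n=(z_0,z_1,\dots,z_{k_n})$ with $z_0=x_n$, $z_{k_n}=y_n$, and select a "thinned" subsequence of its vertices: greedily pick $z_{i_0}=z_0$, then $z_{i_1}$ the first vertex with $d(z_{i_0},z_{i_1})>m$, then $z_{i_2}$ the first vertex after that with $d(z_{i_1},z_{i_2})>m$, and so on. Because consecutive chain-steps have length $\le m$, the triangle inequality forces $d(z_{i_{j}},z_{i_{j+1}})\le 2m$, while by construction $d(z_{i_{j}},z_{i_{j+1}})>m$. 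Thus along this selected finite set $F_n$, each point has a neighbour at distance in $(m,2m]$. Moreover $F_n$ still reaches out to distance $\ge n - O(m)$ from $x_n$, so choosing the $x_n$ far apart (or passing to a subsequence so the $F_n$ are pairwise far apart and the chains genuinely escape to infinity) makes $Y=\bigcup_n F_n$ unbounded.

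Now I claim $Y$ has no asymptotically isolated balls, which contradicts asymptotic scatteredness. Fix any $k\in\w$; I must show $Y$ does not have asymptotically isolated $k$-balls in $Y$. By construction, for every point $z\in F_n$ that is not the last selected vertex there is another point $z'\in F_n$ with $d(z,z')\le 2m$. Passing to the case $k\ge 2m$ (balls only get harder to isolate as $k$ grows, and for the definition it suffices to defeat all large $k$ — actually one checks it must be defeated for *every* $k$, so I also handle small $k$ by noting that an $m$-chain with steps of size $\le m$ visits points at distance exactly in each "scale" below, but the clean argument is: for the fixed scale $2m$ every non-terminal selected point of every $F_n$ has a companion within $2m\le$ any $k\ge 2m$). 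So for $k\ge 2m$, every ball $B_Y(z,k)$ around a non-terminal selected point contains at least two points, hence $B_Y(z,k)\setminus B_Y(z,2m-1)$ could still be empty — that is the wrong direction. Let me instead argue directly with the function $h$: using the characterization at the end of Section 2, I need, for my chosen unbounded $Y$ and *every* $m'\in\w$ and every unbounded $Z\subseteq Y$, that $\{h(z,m'):z\in Z\}$ is *finite*. Since each $z\in F_n$ (non-terminal) has a neighbour within $2m$ and arbitrarily close structure all the way up, one gets $h(z,m')\le 2(m'+m)$ or so for all but finitely many $z$, bounding the set $\{h(z,m'):z\in Z\}$. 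That uniform bound, independent of $n$, is exactly what kills asymptotically isolated balls.

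The main obstacle I anticipate is the bookkeeping in the second paragraph: ensuring the selected sets $F_n$ are simultaneously (a) unbounded in union, (b) "locally dense at scale $2m$" at all their non-terminal points, and (c) arranged so that the $h$-values stay uniformly bounded on every unbounded subset. Handling the small-scale values $m' < 2m$ of the function $h$ requires care — one may need to refine the chain selection to retain vertices at several scales $m, 2m, 4m, \dots$ up to $2m$, or to first replace $d$ by an asymorphic metric in which the relevant chains behave more uniformly. But the conceptual content is entirely in the observation that a long $m$-chain, after greedy thinning, produces a finite configuration every interior point of which has a close companion, and that this property propagates to arbitrarily large scales, so no subspace built from such chains can have asymptotically isolated balls.
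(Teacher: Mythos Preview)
Your strategy---contrapositive, fix $m$ witnessing non-cellularity, extract long $m$-chains---is exactly the paper's. But you are working much harder than necessary, and the extra work stems from a confusion about where the balls in ``asymptotically isolated balls'' live. They are balls in the ambient space $X$, not in the subset $Y$; consequently the function you must bound is $h=h_X$, computed in $X$, and the witnesses showing $h_X(y,k)$ is small need not belong to $Y$ at all.

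Once you see this, the argument collapses to three lines, which is what the paper does: take $Y=\{x_n:n\in\w\}$ to be \emph{just the starting points} of the chains. For each $x_n$ the $m$-chain in $X$ reaches outside $B(x_n,n)$, and since consecutive chain-vertices are at distance $\le m$, the chain meets every annulus $B(x_n,k+m)\setminus B(x_n,k)$ for $0\le k<n$. Hence $h_X(x_n,k)\le k+m$ whenever $n>k$, so for each fixed $k$ the set $\{h_X(x_n,k):n\in\w\}$ is finite, and $Y$ has no asymptotically isolated balls in $X$.

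All of your greedy thinning to the sets $F_n$, the worries about terminal vertices, and the hand-wringing over ``small scales $m'<2m$'' arise only because you are implicitly trying to control $h_Y$ rather than $h_X$. That would indeed be delicate---and in fact spacing the $F_n$ far apart, as you propose, would make $h_Y(z,m')$ blow up for $m'$ larger than the diameter of $F_n$, which is the opposite of what you want. But none of this is needed: the chain vertices already sit in $X$, and that is where the balls are measured.
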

\begin{proof}
If $(X,d)$ is not cellular then there exist $m\in\w$ and an unbounded injective sequence $(x_n)_{n\in\w}$ in $X$ such that $B^\square(x_n,m)\setminus B(x_n,n)\neq\varnothing$. It follows that $h(x_n,k)\leq m$ for each $k\in\{0,...,n\}$. We put $Y=\{x_n:n\in\w\}$ and observe that the set $\{h(x_n,k):n\in\w\}$ is finite for each $k\in\w$. Here $(X,d)$ is not asymptotically scattered.
\end{proof}
\begin{Th} If $X_1$, $X_2$ are asymptotically scattered subspaces of a metric space $(X,d)$ then $X_1\cup X_2$ is asymptotically scattered.\end{Th}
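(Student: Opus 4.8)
The plan is to verify the definition directly. Let $W$ be an unbounded subset of $X_1\cup X_2$; I must exhibit asymptotically isolated balls inside it. Put $W_i=W\cap X_i$. Since $W=W_1\cup W_2$ is unbounded, one of $W_1,W_2$ is unbounded; after relabelling the $X_i$ assume $W_1$ is. As $W_1$ is an unbounded subset of the asymptotically scattered space $X_1$, it has asymptotically isolated $m_1$-balls \emph{inside $X_1$} for some $m_1\in\w$: there is a sequence $(w_n)$ in $W_1$ going to infinity and an increasing sequence $(k_n)$ with $k_0>m_1$ such that $B(w_n,k_n)\cap X_1=B(w_n,m_1)\cap X_1$ for every $n$. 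Let $s_n=h(w_n,m_1)$, computed in the metric space $X_1\cup X_2$ (the first radius past $m_1$ at which the ball around $w_n$ grows). Since up to radius $k_n$ no new point of $X_1$ appears, for large $n$ either $s_n>k_n$, or $s_n\le k_n$ and the growth at radius $s_n$ is caused by a point of $X_2$.

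\emph{Case 1: $\limsup_n s_n=\infty$.} Passing to a subsequence we may assume $s_{n_j}\to\infty$; the points $w_{n_j}$ still go to infinity and $B(w_{n_j},s_{n_j}-1)=B(w_{n_j},m_1)$ in $X_1\cup X_2$, so $W$ has asymptotically isolated $m_1$-balls and we are done.

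\emph{Case 2 (the main case, and the main obstacle): $\sup_n s_n\le K<\infty$.} Here the balls supplied by $X_1$ are filled in, but only at the \emph{bounded} radius $K$. Since $k_n\to\infty$, for all large $n$ we have $s_n\le K<k_n$, so the growth at radius $s_n$ must come from a point $u_n\in X_2$ with $m_1<d(w_n,u_n)=s_n\le K$. The gain is that $d(w_n,u_n)\le K$ is uniform, so $U=\{u_n:n\}$ is an unbounded subset of $X_2$ lying within $K$ of $\{w_n\}$. Applying the hypothesis to $X_2$, $U$ has asymptotically isolated $m_2$-balls inside $X_2$ for some $m_2$: there are a subsequence $u_{n_j}\to\infty$ and an increasing $(t_j)$ with $t_0>m_2$ and $B(u_{n_j},t_j)\cap X_2=B(u_{n_j},m_2)\cap X_2$. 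Now put $m=m_1+m_2+2K+1$. I claim the $w_{n_j}$ carry asymptotically isolated $m$-balls in $X_1\cup X_2$: fix $j$ large and $m<\ell\le\min(k_{n_j},\,t_j-K)$. No point of $X_1$ lies at distance in $(m_1,k_{n_j}]\supseteq(m,\ell]$ from $w_{n_j}$, by the $X_1$-isolation of $w_{n_j}$. And no point $y\in X_2$ lies at distance in $(m,\ell]$ from $w_{n_j}$: otherwise $d(u_{n_j},y)\ge d(w_{n_j},y)-K>m-K>m_2$, so by the $X_2$-isolation of $u_{n_j}$ we would get $d(u_{n_j},y)>t_j$, whence $d(w_{n_j},y)>t_j-K\ge\ell$, a contradiction. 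Hence $B(w_{n_j},\min(k_{n_j},t_j-K))=B(w_{n_j},m)$ in $X_1\cup X_2$, and since $\min(k_{n_j},t_j-K)\to\infty$, after a further passage to a subsequence (to make the isolation radii strictly increasing and eventually exceed $m$) we obtain asymptotically isolated $m$-balls. In all cases $W$ has asymptotically isolated balls, so $X_1\cup X_2$ is asymptotically scattered.

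The only things needing a little care are routine: the repeated passages between "an unbounded set on which $h(\cdot,m)$ takes infinitely many values" and "a sequence that simultaneously escapes to infinity and along which $h$ blows up", and the bookkeeping in the choice $m=m_1+m_2+2K+1$ (made precisely so that a point more than $m$ away from $w_{n_j}$ is more than $m_2$ away from $u_{n_j}$). All the content is in Case 2: an $X_1$-isolated ball can only be destroyed at a bounded radius, so one slides the centre to a nearby point of $X_2$, isolates it there using the hypothesis on $X_2$, and slides back, now controlling the $X_1$-points by the original isolation and the $X_2$-points by the new one on a common annulus.
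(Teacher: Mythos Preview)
Your proof is correct and follows essentially the same route as the paper's: obtain $X_1$-isolated balls along a sequence in $W\cap X_1$, split according to whether the nearest point of $X_1\cup X_2$ beyond radius $m_1$ stays at bounded distance, and in the bounded case slide to a nearby sequence in $X_2$, apply scatteredness of $X_2$, and combine the two isolation statements via the triangle inequality. Your $s_n=h(w_n,m_1)$ is exactly the paper's $d(z_n,y_n)$, and your two cases amalgamate the paper's three (its first two both fall under your $\limsup s_n=\infty$); your constant $m_1+m_2+2K+1$ is a safe overestimate of the paper's $m+m'+t$, and you spell out the final annulus argument that the paper leaves to the reader.
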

\begin{proof}
Let $Y$ be an unbounded subset of $X_1\cup X_2$. We may suppose that $Y\subseteq X_1$. Since $X_1$ is asymptotically scattered there exist $m\in\w$, a sequence $(y_n)_{n\in\w}$ in $Y$ and an increasing sequence $(m_n)_{n\in\w}$ in $\w$ such that for every $n\in\w$,
$$(X_1\cap B(y_n,m_n))\setminus B(y_n,m)=\varnothing.$$
If the set $\{n\in\w: B(y_n,m_n)\setminus B(y_n,m)\neq\varnothing\}$ is finite then $Y'=\{y_n:n\in\w\}$ has asymptotically isolated $m$-balls in $X_1\cup X_2$. Otherwise, we may suppose that $B(y_n,m_n)\setminus B(y_n,m)\neq\varnothing$ for each $n\in\w$. We pick $z_n\in B(y_n,m_n)\setminus B(y_n,m)$ such that
$$d(z_n,y_n)=\min\{d(z,y_n):z\in B(y_n,m_n)\setminus B(y_n,m)\}$$
If the sequence $(d(z_n,y_n)_{n\in\w})$ is unbounded in $\w$, $Y'$ has an asymptotically isolated $m$-balls in $X_1\cup X_2$. Otherwise, we take $t\in\w$ such that $d(z_n,y_n)\leq t$ for each $n\in\w$. Since $Z=\{z_n:n\in\w\}$ is an unbounded subset of $X_2$, $Z$ has an asymptotically isolated $m'$-balls in $X_2$ for some $m'\in\w$. Then $Y'$ has asymptotically isolated $(m+m'+t)$-balls in $X_1\cup X_2$.
\end{proof}
We define a metric $\rho'$ on $\w$ by the rule: $\rho'(n,n)=0$ and $\rho'(n,m)=\max\{n,m\}$ if $m\neq n$. Then we define a metric $\rho$ on $W=\w\times\w$ by the rule:
$$\rho((n,m),(n',m'))=\max\{\rho(n,n'),\rho(m,m')\}.$$
We consider two subspaces $W_1$ and $W_2$ of $(W,\rho)$
$$W_1=\{(n,0):n\in\w \},\text{ } W_2=W_1\cup\{(n,m):n>m>0\},$$
and observe that $W_1$ is isometric to the subspace $\{x\in 2^{<\NN}:supp(x)\leq 1\}$, $W_2$ is isometric to the subspace $\{x\in 2^{<\NN}:supp(x)\leq 2\}$ of the Cantor macro-cube $2^{<\NN}$. Here $supp(x)$ is the number of non-zero coordinates of $x=(x_n)_{n\in\NN}$.
\begin{Th}~\label{t23} A metric space $(X,d)$ is asymptotically scattered if and only if $(X,d)$ has no subspaces asymorphic to $W_2$.\end{Th}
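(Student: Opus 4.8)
The plan is to prove both implications by contraposition. For the forward implication it is enough to show that $W_2$ is not asymptotically scattered and then invoke the facts, recorded above, that asymptotic scatteredness is inherited by subspaces and is a coarse (hence asymorphism invariant) property; consequently any metric space having a subspace asymorphic to $W_2$ fails to be asymptotically scattered. To see that $W_2$ itself is not asymptotically scattered I would use $W_1$ as a witness. Since $d((n,0),(0,0))=n$, the set $W_1$ is unbounded in $W_2$; and for $n>m+1$ the point $(n,m+1)$ lies in $B((n,0),m+1)\setminus B((n,0),m)$, so, all metrics being integer valued, $h((n,0),m)=m+1$. Hence $\{h(x,m):x\in W_1\}$ is finite for every $m$, and by the reformulation of asymptotic scatteredness in terms of $h$ stated just before Theorem~\ref{t21}, $W_2$ is not asymptotically scattered.

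For the converse, suppose $(X,d)$ is not asymptotically scattered. By the same reformulation there is an unbounded $Y\subseteq X$ and an increasing function $c:\w\to\w$ with $c(m)>m$ and $h(y,m)\le c(m)$ for all $y\in Y$, $m\in\w$; equivalently, every $y\in Y$ has, for each $m$, a point of $X$ at distance in $(m,c(m)]$ from it. Put $s_0=0$, $s_{k+1}=c(s_k)+1$, and pass to a subsequence $\tilde s_j=s_{k_j}$ chosen so sparsely that $\tilde s_{j+1}\ge 2c(\tilde s_j)$ for every $j$ (possible since $s_k\to\infty$). The purpose of this is the following fan lemma: for every $y\in Y$ and every $N$ one can choose $q_1,\dots,q_N\in X$ with $d(y,q_j)\in(\tilde s_j,\tilde s_{j+1})$ --- take $q_j$ at distance in $(\tilde s_j,c(\tilde s_j)]$, using $c(\tilde s_j)<\tilde s_{j+1}$ --- and then the triangle inequality together with the sparseness of $(\tilde s_j)$ gives $\tilde s_j/2<d(q_i,q_j)\le\tilde s_{j+1}$ whenever $i<j$. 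Thus $\{y,q_1,\dots,q_N\}$, via $y\mapsto(N{+}1,0)$ and $q_j\mapsto(N{+}1,j)$, is a copy of the $(N{+}1)$st fan $\{(n,0),\dots,(n,n-1)\}$ of $W_2$ up to a distortion governed by a single pair of sequences depending only on $(\tilde s_j)$, and not on $y$ or $N$.

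It then remains to assemble countably many such fans into a copy of $W_2$. I would construct inductively centres $z_0,z_1,\dots\in Y$ together with, for each $n$, fan points $q_{n,1},\dots,q_{n,n-1}\in X$ around $z_n$ at scales $\tilde s_1,\dots,\tilde s_{n-1}$ as in the fan lemma, aiming for the asymorphism $(n,0)\mapsto z_n$, $(n,m)\mapsto q_{n,m}$ onto the union $W$ of these fans. Suppose $z_0,\dots,z_{n-1}$ and their fans have already been chosen; they form a finite, hence bounded, subset of $X$, of diameter $\delta$ say, and I set $R=\max\{\delta,\tilde s_n,n\}$. Using that $Y$ is unbounded, I pick $z_n\in Y$ with $d(z_n,x)\ge 4R$ for every previously chosen point $x$, and attach the fan of $z_n$, which by construction has radius below $\tilde s_n\le R$. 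Since $z_n$ is at distance at least $4R$ from everything built before while its fan has radius below $R$, every distance from a point of $z_n$'s fan to a point of a previous fan lies between $3R$ and a finite bound, hence is comparable to $\max\{n,n'\}=n$, the $W_2$-distance between the $n$th fan and any earlier fan. Combining this with the within-fan estimates of the fan lemma, and noting that for any prescribed bound only finitely many pairs of fans have index-maximum below it, all distances in $W$ become comparable, via a single pair of sequences $(c_n)$ and $(c_n')$, to the corresponding distances in $W_2$ (the ``expanding'' sequence $(c_n)$ being defined after the construction from finitely many data at each scale, the ``contracting'' sequence $(c_n')$ from the lower bounds $d(z_n,x)\ge 4R\ge 4n$ and $d(q_i,q_j)>\tilde s_j/2$). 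The scale separations make the points $z_n$, $q_{n,m}$ pairwise distinct, so the map is a bijection $W_2\to W$, and collecting these finitely many distortion constants at each scale into $(c_n)$, $(c_n')$ completes the proof.

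The one real difficulty is the fan lemma, or more precisely keeping the fan points $q_1,\dots,q_N$ pairwise far apart. The hypothesis $h(y,m)\le c(m)$ by itself only produces a point at distance in $(m,c(m)]$ from $y$, and two such points at consecutive scales could a priori be as close as distance $2$, which would destroy the two-sided control an asymorphism requires. Passing to a sufficiently sparse sequence of scales $\tilde s_j$, each dominating twice the previous $c$-value, is exactly what forces $d(q_i,q_j)$ to have the order of $\tilde s_{\max\{i,j\}}$, matching the rule $d((n,j),(n,j'))=\max\{j,j'\}$ in $W_2$; note that this argument uses no cellularity of $X$, so no preliminary reduction to the ultrametric case (via Theorem~\ref{t21}) is needed.
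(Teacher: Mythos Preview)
Your proof is correct and follows essentially the same strategy as the paper's: compute $h((n,0),m)=m+1$ to see that $W_1$ witnesses that $W_2$ is not asymptotically scattered, and for the converse build fans of increasing depth around centres chosen from a bad subset, placed far enough apart to be separated. The paper phrases the scale selection as ``passing to subsequences $\omega$ times'' to obtain its conditions (1)--(5), whereas you extract the uniform bound $c(m)$ on $h$ first and fix a single sparse scale sequence $(\tilde s_j)$ up front; your ``fan lemma'' is exactly what the paper's conditions (3) and (4) encode, so this is a cleaner packaging of the same argument rather than a different route.
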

\begin{proof} To show that $W_2$ is not asymptotically scattered, we use the function $h:W_2\times\w\to\w$. For $n>m\geq0$, we have  $h((n,0),m)=m+1$. Hence the subspace $W_1$ of $W_2$ has no asymptotically isolated balls.

Now assume that $X$ is not asymptotically scattered and find a subset $W'$ of $X$ asymorphic to $W_2$. We take a subset $X'=\{x_n:n\in\w\}$ without isolated balls. Passing to subsequences of $(x_n)_{n\in\w}$ $\w$ times, we can choose a sequence $(y_n)_{n\in\w}$ in $X'$ and an increasing sequence $(m_n)_{n\in\w}$ in $\w$ such that $m_0=0$ and

$(1)$ $(X\cap B(y_n,m_i))\setminus B(y_n,m_{i-1})\neq\varnothing$, $n>0$, $i\in\{1,...,n\};$

$(2)$ $B(y_n,m_n)\cap B(y_i,m_i)=\varnothing$, $0\leq i<n<\w$.

We use  $(1)$, $(2)$ to choose a subsequence $(z_{n0})_{n\in\w}$ of $(y_n)_{n\in\w}$, a subsequence $(k_n)_{n\in\w}$ and, for each $n>0$, the elements $z_{n1},...,z_{nn}$ such that

$(3)$ $z_{ni}\in(X\cap B(z_{n0},k_i))\setminus B(z_{n0},k_{i-1})$, $i\in\{1,...,n\};$

$(4)$ $d(z_{ni},z_{nj})>j$, $0\leq i<j\leq n;$

$(5)$ $B(z_{n0},k_n)\cap B(z_{i0},k_i)=\varnothing$, $0\leq i<n<\w$.

Then we consider a set $W'=\{z_{00}\}\cup\{z_{ni}: n > i\geq0\}$ and define a mapping $f:W_2\to W'$ by the rule: $f(0,0)=z_{00}$ and $f(n,i)=z_{ni}$, $n>i\geq0$. By $(3)$ and $(4)$, $f$ is a bijection.

If $f$ is not an asymorphism, we get the following two cases.

{\em Case $1$.} There exist $t>0$ and two sequences $(a_n)_{n\in\w}$, $(b_n)_{n\in\w}$ in $W_2$ such that $\rho(a_n,b_n)=t$ but $d(f(a_n),f(b_n))\to\infty$. We may suppose that $a_n=(c_n,t)$, $b_n=(c_n,s)$, $s<t$. But $d(z_{c_nt},z_{c_ns})\leq2k_t$, a contradiction.

{\em Case $2$.} There exist $t>0$ and two sequences $(a_n)_{n\in\w}$, $(b_n)_{n\in\w}$ in $W_2$ such that $d(f(a_n),f(b_n))=t$ but  $\rho(a_n,b_n)\to\infty$. In view of $(4)$, $(5)$, we may suppose that $f(a_n)=z_{c_ni_n}$, $f(b_n)=z_{c_nj_n}$ and $0\leq i_n<j_n<t$. But then $\rho(a_n,b_n)<t$, a contradiction.
\end{proof}
\section{Thin spaces} ~\label{s3}
The following three theorems are from \cite{b5}.
\begin{Th} A metric space $X$ is thin if and only if each unbounded subset of $X$ has asymptotically isolated $0$-balls. \end{Th}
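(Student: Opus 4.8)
The plan is to prove both directions directly from the definitions, using the characterization of thinness in terms of the function $h$. Recall that $(X,d)$ is thin iff for every $m\in\w$ there is a bounded $V$ with $B(x,m)=\{x\}$ for $x\in X\setminus V$; equivalently, for every $m$ and every unbounded $Y\subseteq X$ there are infinitely many $x\in Y$ with $h(x,0)>m$ (i.e.\ $B(x,m)=\{x\}$ inside $X$, hence certainly inside $Y$). The statement to prove is that this holds iff every unbounded subset $Y$ has asymptotically isolated $0$-balls, meaning there is a sequence $(y_n)$ in $Y$ and an increasing sequence $(m_n)$ in $\w$ with $m_0>0$ and $B(y_n,m_n)\setminus B(y_n,0)=\varnothing$ for each $n$, where the balls are taken in $Y$.

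For the forward implication, assume $X$ is thin and let $Y$ be unbounded. I would build the required sequences recursively. Having chosen $y_0,\dots,y_{n-1}$ and $m_0<\dots<m_{n-1}$, pick any $m_n>m_{n-1}$; by thinness applied to $Y$ (a subset of a thin space is thin) with parameter $m_n$, there is a bounded $V$ such that $B(y,m_n)=\{y\}$ for all $y\in Y\setminus V$; since $Y$ is unbounded, choose $y_n\in Y\setminus V$ distinct from the previous points. Then $B(y_n,m_n)\setminus B(y_n,0)=\varnothing$, and the sequences satisfy all the requirements, so $Y$ has asymptotically isolated $0$-balls.

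For the reverse implication, I would argue by contraposition: suppose $X$ is not thin, so there is $m\in\w$ such that for every bounded $V$ some $x\in X\setminus V$ has $B(x,m)\neq\{x\}$. Using this repeatedly one extracts an unbounded sequence $(x_n)$ with pairwise far-apart terms and a point $x_n'\neq x_n$ with $d(x_n,x_n')\le m$ for each $n$. Set $Y=\{x_n:n\in\w\}\cup\{x_n':n\in\w\}$; this is unbounded, and for every $x_n\in Y$ one has $h_Y(x_n,0)\le m$, so there is no way to choose $y_k\in Y$ with $B(y_k,m_k)=\{y_k\}$ for arbitrarily large $m_k$ unless we use only the points $x_n'$ — but the $x_n'$ may themselves fail to be isolated, and in any case one has to be careful. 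The cleaner route: observe that in $Y$, infinitely many points (namely all the $x_n$) satisfy $h_Y(x,0)\le m$, and each such point lies within $m$ of another point of $Y$, so any infinite sequence in $Y$ has infinitely many terms $y$ with $B_Y(y,m+1)\setminus B_Y(y,0)\ne\varnothing$; thus $Y$ cannot have asymptotically isolated $0$-balls. The main obstacle is this last bookkeeping: one must ensure that the "witnessing" partner points $x_n'$ are packaged into $Y$ in a way that genuinely obstructs isolated $0$-balls for \emph{every} choice of subsequence, which requires choosing the $x_n$ along the original unbounded sequence so that the relevant $(m+1)$-balls in $Y$ stay nonempty — a routine but slightly delicate recursive selection using boundedness of the exceptional sets at each stage.
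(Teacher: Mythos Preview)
The paper does not actually prove this theorem: it is quoted without proof from \cite{b5} (Petrenko--Protasov), so there is no in-paper argument to compare against. I will therefore comment only on the soundness of your attempt.

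Your forward direction is fine and is the obvious argument.

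Your reverse direction reaches the right conclusion but is over-complicated, and the ``main obstacle'' you flag at the end is illusory. Two remarks.

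First, in the paper's definition of ``$Y$ has asymptotically isolated $m$-balls'' the balls $B(y_n,m_n)$ are taken in the ambient space $X$, not in $Y$. With that reading the contrapositive is one line: if $X$ is not thin, fix $m$ for which the set $Y=\{x\in X: B(x,m)\neq\{x\}\}$ is unbounded; then for every $y\in Y$ and every $k\ge m$ one has $B(y,k)\setminus\{y\}\neq\varnothing$, so no sequence in $Y$ can witness asymptotically isolated $0$-balls. There is no need to introduce the partner points $x_n'$ at all.

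Second, even under your reading (balls taken in $Y$), your set $Y=\{x_n:n\in\w\}\cup\{x_n':n\in\w\}$ already works with no further ``delicate recursive selection''. You worry that a subsequence might use only the $x_n'$, but each $x_n'$ also has a neighbour in $Y$ at distance $\le m$, namely $x_n$; hence \emph{every} point of $Y$ satisfies $B_Y(y,m)\neq\{y\}$, and so for any sequence $(y_k)$ in $Y$ and any increasing $(m_k)$ one has $B_Y(y_k,m_k)\setminus\{y_k\}\neq\varnothing$ once $m_k\ge m$. The bookkeeping you anticipate is unnecessary; the argument terminates immediately after constructing $Y$.
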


We say that a metric space $X$ is {\em coarsely thin} if $X$ is coarsely equivalent to some thin space.
\begin{Th}~\label{t32} For a metric space $X$, the following statements are equivalent

$(i)$ $X$ is coarsely thin;

$(ii)$ $X$ contains large thin subset;

$(iii)$ there exists $m\in\w$ such that each unbounded subset of $X$ has asymptotically isolated $m$-balls. \end{Th}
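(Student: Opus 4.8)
The plan is to establish the cyclic chain $(i)\Rightarrow(ii)\Rightarrow(iii)\Rightarrow(i)$. The implication $(i)\Rightarrow(ii)$ is essentially formal: unwinding coarse equivalence, $X$ has a large subset $L_1$ asymorphic to a large subset $L_2$ of some thin space $T$; since every subset of a thin space is thin, $L_2$ is thin, and since ``thin'' is an asymptotic property, $L_1$ is thin too, so $L_1$ is a large thin subset of $X$. The real content of the theorem is the equivalence of $(ii)$ and $(iii)$, together with the (again formal) observation that a space containing a large thin subset is coarsely equivalent to that subset.

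For $(ii)\Rightarrow(iii)$, fix a large thin subset $L\subseteq X$ and $k\in\w$ with $B(L,k)=X$; I claim $m=2k$ works for all unbounded subsets at once. Given an unbounded $Y\subseteq X$, for each $y\in Y$ choose $\pi(y)\in L$ with $d(y,\pi(y))\le k$; then $Y'=\pi(Y)$ is an unbounded subset of $L$. Since $L$ is thin, the preceding theorem applied to $(L,d_L)$ gives a sequence $(v_n)$ in $Y'$ and an increasing $(l_n)$ in $\w$ with $B(v_n,l_n)\cap L=\{v_n\}$, and we may drop initial terms so that $l_0>4k$. Choosing $y_n\in Y$ with $\pi(y_n)=v_n$, a short triangle–inequality argument shows that if $d(y_n,z)\le l_n-2k$ then either $z=v_n$, or $z\notin L$ and the point $w\in L$ with $d(z,w)\le k$ must coincide with $v_n$; in every case $d(y_n,z)\le 2k$. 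Hence $B(y_n,l_n-2k)\setminus B(y_n,2k)=\varnothing$, so $Y$ has asymptotically isolated $2k$-balls witnessed by a sequence in $Y$, which is $(iii)$.

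For $(iii)\Rightarrow(i)$, let $m$ be as in $(iii)$ and let $M$ be a maximal $(2m+1)$-discrete subset of $X$, obtained by Zorn's lemma (the union of a chain of $(2m+1)$-discrete sets is $(2m+1)$-discrete). By maximality $B(M,2m+1)=X$, so $M$ is large. To see that $M$ is thin I again use the preceding theorem, now for $(M,d_M)$: given an unbounded $Z\subseteq M$, apply $(iii)$ to get a sequence $(z_n)$ in $Z$ and an increasing $(q_n)$ with $B(z_n,q_n)\setminus B(z_n,m)=\varnothing$ in $X$; since $M$ is $(2m+1)$-discrete and $2m+1>m$ we have $B(z_n,m)\cap M=\{z_n\}$, hence $B(z_n,q_n)\cap M=\{z_n\}$, i.e.\ $Z$ has asymptotically isolated $0$-balls in $M$. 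Thus $M$ is thin, and $X$ is coarsely equivalent to the thin space $M$ via the identity on the large subset $M$, so $X$ is coarsely thin.

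The only non-routine step is the construction in $(iii)\Rightarrow(i)$: the point is that a maximal $(2m+1)$-discrete set is automatically large (by maximality) and, because the parameter $m$ in $(iii)$ is \emph{uniform} and $M$ is already $(2m+1)$-discrete, the $m$-balls that $(iii)$ isolates inside $M$ collapse to singletons, which is exactly thinness via the earlier characterization. Everything else is bookkeeping with the triangle inequality and the definitions of largeness, asymorphism, and coarse equivalence; I expect no genuine obstacle beyond keeping the additive constants straight in $(ii)\Rightarrow(iii)$.
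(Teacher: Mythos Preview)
The paper does not actually prove this theorem: it is one of the three results in Section~\ref{s3} quoted from \cite{b5} and stated without proof, so there is no in-paper argument to compare against. Your proposed proof is correct and self-contained within the paper's framework; each step goes through as written, relying only on the preceding characterization of thinness via asymptotically isolated $0$-balls, the triangle inequality, and a Zorn argument for a maximal discrete net. One cosmetic remark: in $(iii)\Rightarrow(i)$ a maximal $m$-discrete set already suffices (maximality gives $B(M,m)=X$, and $m$-discreteness gives $B(z,m)\cap M=\{z\}$), so the parameter $2m+1$ is a harmless overshoot.
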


A subset $Y$ of a metric space $X$ is called {\em asymptotically isolated} if, for each $m\in\w$, there is a bounded subset $V$ of $X$ such that $B(y,m)\subseteq Y$ for each $y\in Y\setminus V$.
\begin{Th} A metric space $X$ is asymptotically scattered if and only if each unbounded subset of $X$ has an asymptotically isolated coarsely thin subset. \end{Th}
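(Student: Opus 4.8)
The plan is to prove the two implications separately, using Theorem~\ref{t32} as the bridge to ``coarsely thin'' and the characterization of asymptotic scatteredness via the function $h$ (recorded just before Theorem~\ref{t21}) as the bridge to ``asymptotically scattered''. Throughout, ``subset'' in the statement should be read as ``unbounded subset'', since any bounded subset is trivially asymptotically isolated and coarsely thin.

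\emph{Necessity.} Suppose $(X,d)$ is asymptotically scattered and fix an unbounded $Y\subseteq X$; let $h$ denote the $h$-function of the (asymptotically scattered) space $(Y,d|_Y)$. The $h$-criterion applied to $Y$ itself yields $m\in\w$ and an unbounded $Z_0\subseteq Y$ with $\{h(y,m):y\in Z_0\}$ infinite, whence points $p_0,p_1,\dots\in Z_0$ with $h(p_n,m)$ strictly increasing. Put $r_n=h(p_n,m)-1$ and $A_n=B(p_n,m)$, so $m\le r_0<r_1<\cdots$, each $A_n$ has diameter at most $2m$, and $B(p_n,r_n)=B(p_n,m)=A_n$, i.e.\ $A_n$ is isolated in $Y$ out to radius $r_n$. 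I would take $Z=\bigcup_n A_n$ and verify three things. First, $Z$ is unbounded: if $\{p_n\}\subseteq B(p,R)$, then fixing $w\in Y$ far from $p$ and using $w\notin A_n=B(p_n,h(p_n,m)-1)$ we get $d(w,p_n)\ge h(p_n,m)\to\infty$, contradicting boundedness of $\{p_n\}$. Second, $Z$ is asymptotically isolated in $Y$: given $k$, choose $N$ with $r_N\ge k+m$; any $z\in Z$ outside the bounded set $\bigcup_{n<N}A_n$ lies in some $A_n$ with $n\ge N$, hence $B(z,k)\subseteq B(p_n,k+m)\subseteq B(p_n,r_n)=A_n\subseteq Z$. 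Third, $Z$ is coarsely thin: by Theorem~\ref{t32}(iii) it suffices to show every unbounded $Z'\subseteq Z$ has asymptotically isolated $2m$-balls, and for such $Z'$ the map $z\mapsto\nu(z)=\min\{n:z\in A_n\}$ has infinite range (otherwise $Z'$ is a finite union of bounded blocks); picking $z_k\in Z'$ with $\nu(z_k)$ strictly increasing, any $z'\in Z'$ with $d(z',z_k)\le j\le r_{\nu(z_k)}-m$ satisfies $d(z',p_{\nu(z_k)})\le j+m\le r_{\nu(z_k)}$, hence $z'\in A_{\nu(z_k)}$ and $d(z',z_k)\le 2m$, so the $j$-ball of $z_k$ in $Z'$ coincides with its $2m$-ball for all $j$ up to $r_{\nu(z_k)}-m\to\infty$.

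\emph{Sufficiency.} Suppose every unbounded subset of $X$ has an unbounded asymptotically isolated coarsely thin subset; fix unbounded $Y\subseteq X$ and such a $Z\subseteq Y$. By Theorem~\ref{t32}(iii) there is $m\in\w$ such that every unbounded subset of $Z$, in particular $Z$ itself, has asymptotically isolated $m$-balls: fix $(z_n)$ in $Z$ and strictly increasing $(m_n)$ with $m_0>m$ and $(Z\cap B(z_n,m_n))\setminus B(z_n,m)=\varnothing$. Since a bounded subsequence of $(z_n)$ together with $m_n\to\infty$ would force $Z$ to be bounded, we may assume $z_n\to\infty$ in $Y$. As $Z$ is asymptotically isolated in $Y$, fix bounded $V_0\subseteq V_1\subseteq\cdots$ in $Y$ with $B(z,j)\subseteq Z$ (ball in $Y$) whenever $z\in Z\setminus V_j$. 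Because $z_n\to\infty$ and each $V_j$ is bounded, I can choose $n_0<n_1<\cdots$ with $n_k\ge k$ and $z_{n_k}\notin V_{m+1+k}$; writing $\ell_k=m+1+k\le m_{n_k}$ (as $m_n\ge m+1+n$), the $\ell_k$-ball of $z_{n_k}$ in $Y$ lies in $Z\cap B(z_{n_k},m_{n_k})\subseteq B(z_{n_k},m)$. Hence $Y$ has asymptotically isolated $m$-balls, so $X$ is asymptotically scattered.

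The main obstacle is the coarse-thinness step in the necessity part: one must see that gluing the isolation blocks $A_n$ into $Z$ creates no unbounded subset with unbounded local geometry, which is precisely where the uniform bound $2m$ and Theorem~\ref{t32}(iii) are needed, and where care is required since the blocks may overlap or repeat, so the count via $\nu$ must use minimal indices. Secondary care is needed to ensure $Z$ is genuinely unbounded and, in the converse, to pass from the family $(V_j)$ to a single diagonal choice compatible with the prescribed radii $m_n$.
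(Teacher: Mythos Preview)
The paper does not prove this theorem; it is one of three results quoted without proof from \cite{b5}. So there is no in-paper argument to compare against, and your proof has to stand on its own.

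It does. The necessity direction is clean: from the $h$-criterion you extract isolated blocks $A_n=B_Y(p_n,m)$ with isolation radii $r_n\to\infty$, and their union $Z$ is unbounded, asymptotically isolated in $Y$, and coarsely thin via Theorem~\ref{t32}(iii). One small slip: when verifying coarse thinness you check $z'\in Z'$, but Theorem~\ref{t32}(iii) applied to the metric space $Z$ asks for the isolated $2m$-balls of $z_k$ computed in $Z$, not merely in $Z'$. Your computation, however, only uses that $z'\in Y$ (through $B_Y(p_{\nu(z_k)},r_{\nu(z_k)})=A_{\nu(z_k)}$), so replacing ``$z'\in Z'$'' by ``$z'\in Z$'' repairs this at no cost.

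The sufficiency direction is also correct. The argument that $(z_n)$ cannot stay bounded is right (else any $w\in Z$ would eventually fall into $B(z_n,m_n)\cap Z\subseteq B(z_n,m)$, forcing $Z$ bounded), and the diagonal choice $z_{n_k}\notin V_{m+1+k}$ together with the bookkeeping $\ell_k=m+1+k\le m_{n_k}$ is exactly what is needed to push the isolated $m$-balls from $Z$ out to $Y$.

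Your reading that the isolated coarsely thin subset must itself be unbounded is the only sensible one, since bounded subsets satisfy both conditions vacuously; this is implicit in the original formulation.
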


To classify the thin metric spaces up to coarse equivalence, we take an infinite cardinal $\varkappa$, partition $\varkappa$ into countable many subsets $\varkappa=\bigcup_{n\in\NN} K_n$, $|K_n|=\varkappa_n$, define a metric $\rho$ on $\kappa$ by $\rho(x,x)=0$ and $\rho(x,y)=\max\{n,m\}$ if $x\in K_n$, $y\in K_m$ and $x\neq y$.

We denote the obtained metric space by $T(\varkappa_n)_{n\in\NN}$ and observe that $T(1,1,...)$ is isometric to $W_1$.
\begin{Lm}~\label{l34} Let $(X,d)$ be an unbounded thin ultrametric space of cardinality $\varkappa$. Then there is a partition $\varkappa=\bigcup_{n\in\NN} K_n$, $|K_n|=\varkappa_n$ such that $(X,d)$ is asymorphic to $T(\varkappa_n)_{n\in\NN}$. \end{Lm}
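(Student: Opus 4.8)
The plan is to read the partition off the metric and then write down an explicit asymorphism. For $x\in X$ set $\ell(x)=\min\{m\in\w:B(x,m)\neq\{x\}\}$. Since $B(x,0)=\{x\}$ for every $x$, we have $\ell(x)\geq 1$, and $\ell(x)$ is finite because $X$ is unbounded (hence has at least two points) and the metric is integer valued: for any $y\neq x$ one has $y\in B(x,d(x,y))\setminus\{x\}$ with $d(x,y)\in\NN$, so in fact $\ell(x)\leq d(x,y)$. Put $K_m=\ell^{-1}(m)$ and $\varkappa_m=|K_m|$ for $m\in\NN$. The sets $K_m$ partition $X$, so $\sum_{m\in\NN}\varkappa_m=\varkappa$; hence we may realize the space $T(\varkappa_m)_{m\in\NN}$ on the cardinal $\varkappa$ split into pieces $L_m$ with $|L_m|=\varkappa_m$, fix bijections $\phi_m\colon K_m\to L_m$, and let $g=\bigcup_{m\in\NN}\phi_m\colon X\to T(\varkappa_m)_{m\in\NN}$, which is a bijection by construction.

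Next I would record the only two facts used. First, for $x\neq y$ the definition of $\rho$ gives $\rho(g(x),g(y))=\max\{\ell(x),\ell(y)\}$, and the inequality $\ell(x),\ell(y)\leq d(x,y)$ noted above shows $\rho(g(x),g(y))\leq d(x,y)$ for all $x,y$. This at once yields the first asymorphism inequality $d(x,y)\leq n\Rightarrow\rho(g(x),g(y))<n+1$. Second, thinness of $X$ says exactly that for each $m$ the set $\{x\in X:\ell(x)\leq m\}=\{x\in X:B(x,m)\neq\{x\}\}$ is bounded, hence contained in $B(x_0,R_m)$ for a fixed $x_0\in X$ and some $R_m\in\w$.

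For the reverse inequality, suppose $\rho(g(x),g(y))\leq n$. If $x=y$ there is nothing to prove; otherwise $\max\{\ell(x),\ell(y)\}\leq n$, so $x,y\in B(x_0,R_n)$, and the ultrametric inequality gives $d(x,y)\leq\max\{d(x,x_0),d(x_0,y)\}\leq R_n<R_n+1$. Thus $g$ is an asymorphism with constants $c_n=n+1$ and $c_n'=R_n+1$, which gives the lemma. The argument is short, and the one point where the hypotheses are genuinely used is this last step: thinness is what makes the level sets $\{x:\ell(x)\leq n\}$ bounded, and the ultrametric inequality is what converts a common bound on $d(\cdot,x_0)$ into a bound on $d(x,y)$ — neither hypothesis can be dropped, and everything else is bookkeeping about the definitions of $\ell$ and of $T(\varkappa_m)_{m\in\NN}$. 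One may also note, though it follows automatically from the asymorphism, that infinitely many $K_m$ are nonempty, since otherwise $X=\{x:\ell(x)\leq N\}$ would be bounded for some $N$.
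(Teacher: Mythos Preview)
Your proof is correct and takes essentially the same route as the paper: partition $X$ into bounded pieces indexed by $\NN$, then send each piece bijectively to the corresponding block of $T(\varkappa_n)_{n\in\NN}$ and check both asymorphism inequalities. The only difference is cosmetic --- you define the pieces in one stroke via the nearest-neighbour level $\ell(x)=\min\{m:B(x,m)\neq\{x\}\}$, whereas the paper builds them recursively by peeling off the non-singleton $\sim_m$-classes; the verification of the asymorphism is the same in spirit.
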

\begin{proof}
We partition $X=\bigcup_{i\in\w}X_{i1}$ by the equivalence $\sim_1$ defined by $x\sim_1 y\Leftrightarrow x=y\vee d(x,y)=1$. Since $X$ is thin, the set $X_1=\bigcup\{X_{i1}:|X_{i1}|>1\}$ is bounded. We partition $X\setminus X_1=\bigcup_{i\in\omega}X_{i2}$ by the equivalence $\sim_2$ defined by $x\sim_2 y\Leftrightarrow x=y\vee d(x,y)=2$. Since $X\setminus X_1$ is thin, the set $X_2=\cup\{X_{i2}:|X_{i2}|>1\}$ is bounded. We partition $X\setminus(X_1\cup X_2)$ by the equivalence $\sim_3$ defined by $x\sim_3 y\Leftrightarrow x=y\vee d(x,y)=3$, and so on.

After $\omega$ steps, we get a partition $X=\bigcup_{n\in\NN}X_n$, $|X_n|=\varkappa_n$. Then we partition $\varkappa=\bigcup K_n$ so that $|K_n|=\varkappa_n$. For each $n\in\NN$, let $f_n:X_n\to K_n$ be an arbitrary bijection. It is easy to see that $f=\bigcup_{n\in\NN}f_n$ is an asymorphism between $X$ and $T(\kappa_n)_{n\in\NN}$.
\end{proof}

For a metric space $X$, the minimal cardinality $asden(X)$ of large subsets of $X$ is called an {\em asymptotic density} of $X$. Clearly $asden(X)=1$ if and only if $X$ is bounded. We note also that asymptotic density is invariant under coarse equivalence, and each metric space is coarsely equivalent to a metric space $X$ such that $|X|=asden(X)$.
\begin{Th}~\label{t35} Let $X$ be an unbounded thin metric space such that $|X|=asden(X)=\varkappa$. Then the following statements hold

$(i)$ if $\varkappa=\aleph_0$ then $X$ is coarsely equivalent either to $T(1,1,...)$ or to $T(\aleph_0,\aleph_0,...)$;

$(ii)$ if $cf(\varkappa)>\aleph_0$ then $X$ is coarsely equivalent to $T(\varkappa,\varkappa,...)$;

$(iii)$ if $\varkappa>\aleph_0$, $cf(\varkappa)=\aleph_0$ and $(\varkappa_n')_{n\in\NN}$ is a sequence of cardinals such that $\varkappa_n'<\varkappa_{n+1}'$ and $\sup\{\varkappa_n':n\in\NN\}=\varkappa$ then $X$ is coarsely equivalent either to $T(\varkappa_n')_{n\in\NN}$ or to $T(\varkappa,\varkappa,...)$. \end{Th}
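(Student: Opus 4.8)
The plan is to reduce the general case to the model spaces $T(\varkappa_n)_{n\in\NN}$ and then classify those. Since $X$ is thin it is asymptotically scattered, hence cellular by Theorem~\ref{t21}, hence asymorphic to an ultrametric space; as "thin" is an asymptotic property this ultrametric space is again thin, and by Lemma~\ref{l34} it is asymorphic to some $T(\varkappa_n)_{n\in\NN}$ with $\bigcup_nK_n=\varkappa$, $|K_n|=\varkappa_n$. As $asden$ is a coarse invariant, $asden(T(\varkappa_n)_{n\in\NN})=\varkappa=|X|$. A direct inspection of the balls of $T(\varkappa_n)_{n\in\NN}$ shows that a subset $L$ is large iff it contains $\bigcup_{k>m}K_k$ for some $m$ (plus at least one point of $\bigcup_{k\le m}K_k$), and that such an $L$ is asymorphic to $T(\lambda,\varkappa_{m+1},\varkappa_{m+2},\dots)$ with $\lambda=|L\cap\bigcup_{k\le m}K_k|$. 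Consequently $asden(T(\varkappa_n)_{n\in\NN})=\max\{\aleph_0,\inf_m\sup_{k>m}\varkappa_k\}$ for unbounded $X$, so the hypothesis $asden=|X|=\varkappa$ is automatic when $\varkappa=\aleph_0$ and is equivalent to "$\sup_{k>m}\varkappa_k=\varkappa$ for every $m$" when $\varkappa>\aleph_0$.

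Next I would isolate two operations on the spaces $T(\cdot)$. \emph{(a) Regrouping}: for any $0=n_0<n_1<n_2<\dots$ the identity map is an asymorphism of $T(\varkappa_n)_{n\in\NN}$ onto $T(\nu_i)_{i\in\NN}$, $\nu_i=\sum_{n_{i-1}<k\le n_i}\varkappa_k$, because the block index of a level $k$ never exceeds $k$. In particular one may delete zero levels, merge each finite level into a neighbouring infinite one (so that block sums equal block maxima), and — applied to $T(1,1,\dots)$ — obtain $T(1,1,\dots)\cong T(\lambda_n)_{n\in\NN}$ for \emph{any} sequence of positive integers $\lambda_n$. \emph{(b) Spreading}: if $P_k\le Q_k\le P_{k+1}$ for all $k$, then $\sum_kP_k=\sum_kQ_k=:\varkappa^*$ and $T(P_n)_{n\in\NN}\cong T(Q_n)_{n\in\NN}$; to see this, identify both underlying sets with the ordinal $\varkappa^*$ so that level $n$ is the interval $[\sigma_{n-1},\sigma_n)$, $\sigma_n$ the corresponding partial sum, observe that the inequalities give $\sigma^P_n\le\sigma^Q_n\le\sigma^P_{n+1}$, hence the identity on $\varkappa^*$ moves the level of each point by at most $1$, and since in $T(\cdot)$ the distance between two distinct points equals the maximum of their level indices, this identity is an asymorphism.

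The easy cases now fall out. If $\varkappa=\aleph_0$ and infinitely many $\varkappa_n=\aleph_0$, regroup so that each block contains such a level: then all block sums are $\aleph_0$ and $X\sim T(\aleph_0,\aleph_0,\dots)$; otherwise choose $n_0$ past all infinite levels, pass to a large subset $\sim T(\lambda,\varkappa_{n_0+1},\dots)$ with all entries finite and infinitely many positive, and apply (a) to get $X\sim T(1,1,\dots)$. If $cf(\varkappa)>\aleph_0$, then "$\sup_{k>m}\varkappa_k=\varkappa$ for all $m$" forces infinitely many $\varkappa_k=\varkappa$ (a countable supremum of cardinals $<\varkappa$ stays $<\varkappa$), and regrouping gives $X\sim T(\varkappa,\varkappa,\dots)$. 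The substantial case is $\varkappa>\aleph_0$, $cf(\varkappa)=\aleph_0$. If infinitely many $\varkappa_k=\varkappa$, regroup to get $X\sim T(\varkappa,\varkappa,\dots)$. Otherwise all but finitely many $\varkappa_k$ are $<\varkappa$; passing to a large subset and regrouping (merging finite levels into infinite ones, then grouping into strictly longer blocks, using "$\sup_{k>m}\varkappa_k=\varkappa$") I obtain $X\sim T(M_n)_{n\in\NN}$ with $(M_n)$ strictly increasing, each $M_n$ an infinite cardinal $<\varkappa$, and $\sup_nM_n=\varkappa$. The same reductions applied to the given $T(\varkappa'_n)_{n\in\NN}$ bring it to the same shape. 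Then I choose interlacing subsequences $M_{i_1}\le\varkappa'_{j_1}\le M_{i_2}\le\varkappa'_{j_2}\le\cdots$ (possible as both sequences increase to $\varkappa$); since all entries are infinite, regrouping takes $T(M_n)_{n\in\NN}$ to $T(M_{i_k})_{k\in\NN}$ and $T(\varkappa'_n)_{n\in\NN}$ to $T(\varkappa'_{j_k})_{k\in\NN}$, operation (b) identifies $T(M_{i_k})_{k\in\NN}$ with $T(\varkappa'_{j_k})_{k\in\NN}$, and chaining the coarse equivalences gives $X\sim T(\varkappa'_n)_{n\in\NN}$.

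I expect the main obstacle to be operation (b) together with the interlacing argument: producing the nontrivial asymorphism between $T(P_n)_{n\in\NN}$ and $T(Q_n)_{n\in\NN}$ when the sequences only interlace — a common refinement by sub-blocks would not suffice (take $\aleph_{2n}$ versus $\aleph_{2n+1}$) — and arranging the bookkeeping so that block sums coincide with block maxima and regrouping genuinely yields the required subsequences. The reduction to $T(\varkappa_n)_{n\in\NN}$, the description of large subsets, and the regrouping lemma are routine, though the evaluation of $asden(T(\varkappa_n)_{n\in\NN})$ and the translation of the hypothesis $asden=|X|$ into "$\sup_{k>m}\varkappa_k=\varkappa$" need care. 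One may also note that the two alternatives in $(i)$ and in $(iii)$ are genuinely different — in $T(1,1,\dots)$ every ball of a given radius is finite whereas $T(\aleph_0,\aleph_0,\dots)$ contains arbitrarily large subsets of bounded diameter, and similarly $T(\varkappa'_n)_{n\in\NN}$ has all balls of radius $d$ of size $\le\varkappa'_d<\varkappa$ while $T(\varkappa,\varkappa,\dots)$ does not — but this is not needed for the stated existence claim.
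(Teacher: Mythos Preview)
Your argument is correct and follows the same overall route as the paper: reduce to an ultrametric via Theorem~\ref{t21}, apply Lemma~\ref{l34} to land in some $T(\varkappa_n)_{n\in\NN}$, and then sort out the combinatorics of the sequence $(\varkappa_n)$. The case split and the interlacing idea in~$(iii)$ are the same as the paper's.

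The difference is organisational. The paper handles each case by writing down a single bijection and leaving the asymorphism check to the reader; in particular, in Case~1 of~$(iii)$ it directly chooses sequences $(n_k),(m_k)$ with $\varkappa'_{m_k}\le|\bigcup_{n_{k-1}<i\le n_k}K_i|<\varkappa'_{m_{k+1}}$ and a bijection $f$ satisfying $\bigcup_{i\le m_k}K_i'\subseteq f(\bigcup_{i\le n_k}K_i)\subseteq\bigcup_{i\le m_{k+1}}K_i'$. Your two operations (a)~regrouping and (b)~spreading are exactly the content of that construction, isolated as reusable lemmas: regrouping is what the paper does implicitly in~$(i)$ Case~2, $(ii)$, and $(iii)$ Case~2, while spreading together with the interlacing subsequences reproduces the bijection of~$(iii)$ Case~1. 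Your version makes the verification that the maps are asymorphisms explicit (level shifts bounded by~$1$), which the paper omits; on the other hand the paper gets to the answer with less scaffolding. Two minor points: the parenthetical ``plus at least one point of $\bigcup_{k\le m}K_k$'' in your description of large subsets is unnecessary, and in operation~(b) ``the ordinal $\varkappa^*$'' should strictly be ``an ordinal of cardinality $\varkappa^*$'' (namely $\sup_n\sigma_n^P=\sup_n\sigma_n^Q$); neither affects the argument.
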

\begin{proof}
In view of Theorem~\ref{t21}, we may suppose that $X$ is ultrametric. By Lemma~\ref{l34}, there is a partition $\varkappa=\bigcup_{n\in\NN} K_n$, $|K_n|=\varkappa_n$ such that $X$ is asymorphic to $T(\kappa_n)_{n\in\NN}$.

$(i)$ We consider two cases.

{\em Case $1$}. There exists $n_0\in\NN$ such that $\kappa_n<\aleph_0$ for each $n>n_0$. The subset $\aleph_0\setminus\bigcup_{i\leq n_0}K_i$ is large in $T(\kappa_n)_{n\in\NN}$ and hence coarsely equivalence to $T(\varkappa_n)_{n\in\NN}$. We take an arbitrary bijection $f:\aleph_0\setminus\bigcup_{i\leq n_0}K_i\to T(1,1,...)$ and note that $f$ is an asymorphism.

{\em Case $2$}. There exists an increasing sequence $(n_k)_{k\in\NN}$ in $\NN$ such that $\varkappa_{n_k}=\aleph_0$ for each $k\in\NN$. We partition $\aleph_0=\bigcup_{k\in\NN}K_k'$ so that $|K_k'|=\aleph_0$ for each $k\in\NN$. Then we take an arbitrary bijection $f:\aleph_0\to\aleph_0$ such that $f(\bigcup_{i\leq n_1}K_i)=K_1'$, $f(\bigcup_{n_1<i\leq n_2}K_i)=K_2,...$. It is easy to see that $f$ is an asymorphism between $T(\kappa_n)_{n\in\NN}$ and $T(\aleph_0,\aleph_0,...)$.

$(ii)$ Asume that there exists $n_0\in\NN$ such that $\varkappa_n<\varkappa$ for each $n>n_0$. On one hand, the subspace $\varkappa\setminus\bigcup_{i\leq n_0}K_i$ is coarsely equivalent to $T(\varkappa_n)_{n\in\NN}$. On the other hand, $|\varkappa\setminus\bigcup_{i\leq n_0} K_i|<\varkappa$ because $cf(\varkappa)>\aleph_0$. Thus $asden(X)<\varkappa$ contradicting the assumption.

Hence there exists an increasing sequence $(n_k)_{k\in\NN}$ in $\NN$ such that $K_{n_k}=\varkappa$ for each $k\in\NN$. We partition $\varkappa=\bigcup_{k\in\NN}K_k'$, $|K_k'|=\varkappa$ and define an asymorphism $f:T(\kappa_n)_{n\in\NN}\to T(\kappa,\kappa,...)$ as in the Case $2$ of $(i)$.

$(iii)$ We consider two cases.

{\em Case $1$}. There is $m\in\NN$ such that $\varkappa_n<\varkappa$ for each $n\geq m$. We may suppose that $m=1$. We partition $\varkappa=\bigcup_{n\in\NN}K_n'$ such that $K_n'=\kappa_n'$ and choose two increasing sequences $(n_k)_{k\in\NN}$, $(m_k)_{k\in\NN}$ in $\NN$ such that $\varkappa_{m_1}'\leq|\bigcup_{i\leq n_1}K_i|<\varkappa_{m_2}'$ and, for each $k>1$,
$$\varkappa_{m_k}'\leq|\bigcup_{n_k<i\leq n_{k+1}}K_i|<\varkappa_{m_{k+1}}'.$$
Then we choose a bijection $f:\varkappa\to\varkappa$ such that for each $k\in\NN$,
$$\bigcup_{i\leq m_k}K_i'\subseteq f(\bigcup_{i\leq n_k}K_i)\subseteq\bigcup_{i\leq m_{k+1}}K_i'.$$
Then $f$ is a desired asymorphism between $T(\varkappa_n)_{n\in\NN}$ and $T(\varkappa_n')_{n\in\NN}$.

{\em Case $2$}. There is an increasing sequence $(n_k)_{k\in\NN}$ in $\NN$ such that $\varkappa_{n_k}=\varkappa$ for each $k\in\NN$. Then an asymorphism $f:T(\varkappa_n)_{n\in\NN}\to T(\varkappa,\varkappa,...)$ can be defined as in the Case $2$ of $(i)$.
\end{proof}
\begin{Rm} The metric spaces $T(1,1,...)$ and $T(\aleph_0,\aleph_0,...)$ are not coarsely equivalent because $T(1,1,...)$ is uniformly locally finite but $T(\aleph_0,\aleph_0,...)$ is not of bounded geometry. We show that the metric spaces $T(\varkappa,\varkappa,...)$ and $T(\varkappa_n')_{n\in\NN}$ from $(iii)$ of Theorem~\ref{t35} are not coarsely equivalent. We note that each large subset of $T(\varkappa,\varkappa,...)$ (resp. $T(\varkappa_n')_{n\in\NN}$) is asymorphic to $T(\varkappa,\varkappa,...)$ (resp. $T(\varkappa_n')_{n\in\NN}$). Let $f:T(\varkappa,\varkappa,...)\to T(\varkappa_n')_{n\in\NN}$ is a bijection, and let $\varkappa=\bigcup_{n\in\NN}K_n$ be a partition which determine $T(\varkappa,\varkappa,...)$. On one hand $K_1$ is bounded in $T(\varkappa,\varkappa,...)$. On the other hand $|f(K_1)|=\varkappa$ so $f(K_1)$ is unbounded in $T(\varkappa_n')_{n\in\NN}$. Hence $f$ is not an asymorphism. \end{Rm}

\section{Scattered types}~\label{s4}
Let $X$ be a scattered metric space. We say that $X$ is of {\em type $0$} if $X$ is bounded, and $X$ is of {\em type $1$} if $X$ is unbounded and coarsely thin. For $m>1$, $X$ is of {\em type $m$} if and only if $X$ can be partitioned in $m$ coarsely thin subsets, but $X$ is not of type less then $m$. If $X$ is not of type $m$ for each $m\in\w$, we say that $X$ is of {\em infinite type}. Clearly, the types are invariant under coarse equivalence.

For $m\in\w$, we say that a metric space $X$ is {\em $m$-thin} if, for every $n\in\w$, there exists a bounded subset $V$ of $X$ such that $|B(x,n)|\leq m$ for each $x\in X\setminus V$. Clearly, $X$ is $0$-thin if and only if $X$ is bounded. By \cite{b4}, every unbounded $m$-thin space can be partitioned in $\leq m$ thin subsets. Applying Theorem~\ref{t32}, we conclude that an unbounded metric space $X$ is of type $m$ if and only if $m$ is the minimal number such that $X$ contains a large $m$-thin subsets. Thus, a classification of scattered metric spaces of finite types is reduced to the case of $m$-thin spaces.

Now we consider some construction of asymptotically scattered spaces. Let $(X_n)_{n\in\w}$ be a sequence of subsets of $\w$ such that $\min X_n>n$ for each $n>0$. We denote by $W(X_n)_{n\in\w}$ the subspace $\bigcup\{X_n\times\{n\}:n\in\w\}$ of $W_2$. Applying Theorem~\ref{t23}, we conclude that $W(X_n)_{n\in\w}$ is asymptotically scattered if and only if, for each infinite subset $I$ of $\w$, there exists a finite subset $F\subset I$ such that $\bigcap_{n\in F}X_n$ is finite.

By Theorem~\ref{t35} $(i)$, each metric space of bounded geometry of type $1$ is coarsely equivalent to $W_1$. In the following example we show that, up to coarse equivalence, there is only one metric space of bounded geometry of type $2$.
\begin{Ex} We take a partition $\{X_n:n\in\w\}$ of $\w$ into infinite subsets such that $\min X_n>n$ for each $n>0$, and show that each metric space $X$ of bounded geometry of type $2$ is coarsely equivalent to $W(X_n)_{n\in\w}$. We may suppose that $X$ is subspace of $2^{<\NN}$ and $X=Y\cup Z$, $Y\cap Z=\varnothing$ where $Y,Z$ are unbounded thin subsets of $2^{<\NN}$. For each $z\in Z$, we pick $a(z)\in 2^{<\NN}$ such that $z+a(z)\in Y$ and $\parallel a(z)\parallel=\min\{d(z,y):y\in Y\}$. Let $A=\{a(z):z\in Z\}$, and for each $a\in A$, $Z_a=\{z\in Z: a(z)=a\}$. We set $A_0=\{a\in A:Z_a\text{ is infinite}\}$, $A_1=A\setminus A_0$. If $A_0$ is finite then $Y\cup Z$ is coarsely thin, so we assume that $A_0$ is infinite, $A_0=\{a_n:n>0\}$. Since $Y\cup Z$ is $2$-thin, $Z_a\cap Z_b$ is finite for all distinct $a,b\in A_0$. We put $Z_1=Z_{a_1}$ and $Z_n=Z_{a_n}\setminus(Z_{a_0}\cup...\cup Z_{a_{n-1}})$ for all $n>1$. Then we define a bijection $f:W(X_n)_{n\in\w}\to Y\cup Z$ such that $f(X_n\times\{n\})=Z_n$ for each $n>0$, so $f(X_0\times\{0\})=Y\cup\bigcup\{Z_a: a\in A_1\}$. This $f$ is a desired asymorphism between $W(X_n)_{n\in\w}$ and $X$.\end{Ex}
\begin{Ex} For each $n\geq2$, we construct a subset of $W_2$ of type $n$. To this end, we take a sequence $(X_n)_{n\in\w}$ of infinite subsets of $\w$ such that $\min X_n>n$, $n>0$ and

$(1)$ $\bigcap_{n\in F} X_n$ is finite for each $F\subset\w$, $|F|=n$;

$(2)$ $\bigcap_{n\in H} X_n$ is infinite for each $H\subset\w$, $|F|=n-1$.

By $(1)$, $W(X_n)_{n\in\w}$ is $n$-thin. By $(2)$, each large subset of $W(X_n)_{n\in\w}$ is not $(n-1)$-thin. Hence, $W(X_n)_{n\in\w}$ is of type $n$.\end{Ex}
\begin{Pb} For each $n\geq3$, classify the metric spaces of bounded geometry of type $n$ up to coarse equivalence. \end{Pb}

\section{Ballean context} ~\label{s5}
Following \cite{b9}, we say that a {\em ball structure} is a triple $\BB=(X,P,B)$, where $X$, $P$ are non-empty sets and, for every $x\in X$
and $\alpha\in P$, $B(x,\alpha)$ is a subset of $X$ which is called a {\em ball of radius $\alpha$ around $x$}. It is supposed that $x\in B(x,\alpha)$ for all $x\in X$ and $\alpha\in P$. The set $X$ is called the {\em support} of $\BB$, $P$ is called the set of {\em radii}.

Given any $x\in X$, $A\subseteq X$, $\alpha\in P$, we set
$$B^\ast(x.\alpha)=\{y\in X: x\in B(y,\alpha)\},\text{ } B(A,\alpha)=\bigcup_{a\in A}B(a,\alpha).$$
A ball structure $\BB=(X,P,B)$ is called a {\em ballean} if

$\bullet$ for any $\alpha,\beta\in P$, there exist $\alpha',\beta'\in P$ such that, for every $x\in X$,
$$B(x,\alpha)\subseteq B^\ast(x,\alpha'),\text{ }B^\ast(x,\beta)\subseteq B(x,\beta');$$

$\bullet$ for any $\alpha,\beta\in P$, there exists $\gamma\in P$ such that, for every $x\in X$,
$$B(B(x,\alpha),\beta)\subseteq B(x,\gamma).$$
A ballean $\BB$ on $X$ can also be determined in terms of entourages of diagonal $\Delta_X$ of $X\times X$, in this case it is called a {\em coarse structure} \cite{b10}. For our "scattered" goal, we prefer the ball language.

We suppose that all ballean under consideration are {\em connected}, i.e. for any $x,y\in X$ there exists $\alpha\in P$ such that $y\in B(x,\alpha)$.

Let $\BB_1=(X_1,P_1,B_1)$ and $\BB_2=(X_2,P_2,B_2)$ be balleans. A mapping $f:X_1\to X_2$ is called a $\prec$-mapping if, for every $\alpha\in P_1$, there exists $\beta\in P_2$ such that, for every $x\in X$ $f(B_1(x,\alpha))\subseteq B_2(f(x),\beta)$. If there exists a bijection $f: X_1\to X_2$ such that $f$ and $f^{-1}$ are $\prec$-mappings, $\BB_1$ and $\BB_2$ are called {\em asymorphic}.

For a ballean $\BB=(X,P,B)$, a subset $Y\subseteq X$ is called {\em large} if there is $\alpha\in P$ such that $X=B(Y,\alpha)$. A subset $V\subseteq X$ is called bounded if $V\subseteq B(x,\alpha)$ for some $x\in X$ and $\alpha\in P$. Each non-empty subset $Y\subseteq X$ defines a {\em subballean} $\BB_Y=(Y,P,B_Y)$, where $B_Y(Y,\alpha)=Y\cap B(y,\alpha)$.

By definition, two balleans $\BB=(X,P,B)$ and $\BB'=(X',P',B')$ are {\em coarsely equivalent} if there exist large subsets $Y\subseteq X$, $Y'\subseteq X'$ such that the subballeans $\BB_Y$ and $\BB_{Y'}$ are asymorphic.

Let $G$ be a group, $\II$ be an ideal in the Boolean algebra $\PP_G$ of all subsets of $G$, i.e. $\varnothing\in\II$ and if $A,B\in\II$ and $A'\subseteq A$, then $A\cup B\in\II$ and $A'\in\II$. An ideal $\II$ is called a {\em group ideal} if $\II$ contains all finite subsets of $G$ and, for all $A,B\in\II$, we have $AB\in\II$ and $A^{-1}\in\II$.

Now let $X$ be a transitive $G$-space with the action $G\times X\to X$, $(g,x)\mapsto gx$, and let $\II$ be a group ideal in $\PP_G$. We define a ballean $\BB(G,X,\II)$ as a triple $(X,\II,B)$, where $B(x,A)=Ax\cup\{x\}$ for all $x\in X$, $A\in\II$. By \cite[Theorem 1]{b6} every ballean $\BB$ with the support $X$ is asymorphic to the ballean $\BB(G,X,\II)$ for some group $G$ of permutation of $X$ and some group ideal $\II$ in $\PP_G$.

For a group $G$, we denote by $\mathfrak{F}_G$ the ideal of all finite subsets of $G$. Each metric space $(X,d)$ can be considered as the ballean $(X,\w,B_d)$.

The following two theorems are from \cite[Theorem 2.1.1]{b9} and \cite[Theorem 1]{b8}.
\begin{Th} Let $G$ be a countable transitive group of permutations of a set $X$. Then there exists a uniformly locally finite metric $d$ on $X$ such that the ballean $\BB(G,\mathfrak{F}_G,X)$ is asymorphic to $(X,d)$.\end{Th}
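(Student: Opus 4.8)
The plan is to build $d$ as a word-type metric from a cofinal chain of finite subsets of $G$ enjoying a subadditivity property, and then to verify that the identity map is the required asymorphism.

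Since $G$ is countable, enumerate $G=\{h_n:n\in\w\}$ with $h_0=e$ and set $S_n=\{h_0,\dots,h_n,h_0^{-1},\dots,h_n^{-1}\}$. Each $S_n$ is a finite symmetric set containing $e$, with $S_n\subseteq S_{n+1}$, $\bigcup_{n\in\w}S_n=G$, and every $A\in\mathfrak{F}_G$ contained in some $S_n$. First I would thin this chain: recursively pick $0=n_0<n_1<n_2<\cdots$ with $S_{n_k}S_{n_k}\subseteq S_{n_{k+1}}$ for all $k$ (possible because $S_{n_k}S_{n_k}$ is finite, hence lies in some $S_N$, and we take $n_{k+1}=N$). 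Put $E_k=S_{n_k}$; then $E_0=\{e\}$, each $E_k$ is finite and symmetric, $E_k\subseteq E_{k+1}$, $\bigcup_kE_k=G$, every $A\in\mathfrak{F}_G$ lies in some $E_k$, and, crucially, $E_kE_\ell\subseteq E_{k+\ell}$ for all $k,\ell\in\w$: for $k\le\ell$ with $\ell\ge1$ one has $E_kE_\ell\subseteq E_\ell E_\ell\subseteq E_{\ell+1}\subseteq E_{k+\ell}$ by nestedness, while the case $k=0$ is trivial.

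Next, define $d(x,y)=\min\{k\in\w:x\in E_ky\}$. Transitivity of the $G$-action together with $\bigcup_kE_k=G$ guarantees that for all $x,y$ some $g\in G$ sends $x$ to $y$, so $g^{-1}\in E_k$ for large $k$ and $x\in E_ky$; hence $d$ is well defined and integer valued, and $d(x,y)=0$ iff $x\in E_0y=\{y\}$ iff $x=y$. Symmetry of $d$ is immediate from symmetry of the $E_k$, and if $d(x,y)=k$, $d(y,z)=\ell$, then $x\in E_ky\subseteq E_kE_\ell z\subseteq E_{k+\ell}z$, so $d(x,z)\le k+\ell$; thus $d$ is a metric. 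Because the $E_k$ are nested and contain $e$, the ball $B_d(x,k)$ equals $\bigcup_{j\le k}E_jx=E_kx$, whence $|B_d(x,k)|\le|E_k|$ for every $x$, so $(X,d)$ is uniformly locally finite with bounding function $c(k)=|E_k|$.

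It remains to see that the identity map is an asymorphism between $\BB(G,\mathfrak{F}_G,X)$ and the ballean $(X,\w,B_d)$ of $(X,d)$. Given $A\in\mathfrak{F}_G$, choose $k$ with $A\subseteq E_k$; then $B(x,A)=Ax\cup\{x\}\subseteq E_kx=B_d(x,k)$, so the identity is a $\prec$-mapping in one direction. Conversely, for $k\in\w$ we have $B_d(x,k)=E_kx=B(x,E_k)$ with $E_k\in\mathfrak{F}_G$, so the identity is a $\prec$-mapping the other way. Hence the two balleans are asymorphic, as desired. The only nonroutine point in the argument is the thinning step producing a cofinal chain $(E_k)$ with $E_kE_\ell\subseteq E_{k+\ell}$; granting that, the metric axioms, uniform local finiteness, and the asymorphism are all direct verifications.
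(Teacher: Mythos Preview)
Your proof is correct. Note that the paper does not actually prove this theorem; it merely cites \cite[Theorem~2.1.1]{b9}. The construction you give---a word-type metric built from a cofinal increasing chain $(E_k)$ of finite symmetric subsets satisfying $E_kE_\ell\subseteq E_{k+\ell}$---is the standard one and is essentially what the cited reference does. One cosmetic remark: in verifying $E_kE_\ell\subseteq E_{k+\ell}$ you only wrote the case $k\le\ell$ explicitly; the case $k>\ell\ge1$ is handled symmetrically via $E_kE_\ell\subseteq E_kE_k\subseteq E_{k+1}\subseteq E_{k+\ell}$, and $\ell=0$ is trivial, so nothing is missing.
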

\begin{Th} Let $(X,d)$ be a uniformly locally finite metric space. Then there exists a countable group $G$ of permutations of $X$ such that $(X,d)$ is asymorphic to $\BB(G,\mathfrak{F}_G,X)$. \end{Th}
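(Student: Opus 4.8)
The plan is to take for $G$ the subgroup of $\mathrm{Sym}(X)$ generated by a suitable countable family of involutions, one involution for each colour class of a finite edge-colouring of each of the bounded-degree ``distance-$n$ graphs'' of $(X,d)$, and then to check that the identity map of $X$ is an asymorphism between the ballean $(X,\w,B_d)$ of $(X,d)$ and $\BB(G,\mathfrak{F}_G,X)$.

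First I would observe that $X$ is automatically countable: for a fixed $x_0\in X$ every point $y$ lies in $B_d(x_0,d(x_0,y))$, so $X=\bigcup_{n\in\w}B_d(x_0,n)$, and each of these balls is finite by uniform local finiteness. For each $n\ge 1$ let $\Gamma_n$ be the graph on the vertex set $X$ in which distinct $x,y$ are adjacent precisely when $d(x,y)\le n$. Since $|B_d(x,n)|\le c(n)$ for the uniform bound $c$, the (countable) graph $\Gamma_n$ has maximum degree at most $c(n)-1$; hence a straightforward greedy colouring of its edges decomposes $E(\Gamma_n)$ into finitely many matchings $M_{n,1},\dots,M_{n,k_n}$ (no appeal to Vizing's theorem is needed). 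For a matching $M$, let $\tau_M\in\mathrm{Sym}(X)$ be the involution swapping the two endpoints of each edge of $M$ and fixing all remaining points, and set $G=\langle\,\tau_{M_{n,j}}:n\ge 1,\ 1\le j\le k_n\,\rangle$. Being generated by a countable set, $G$ is countable.

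It remains to check that $G$ acts transitively on $X$ and that the identity map realizes the asymorphism. If $x\ne y$ and $d(x,y)=n$, then the edge $\{x,y\}$ of $\Gamma_n$ lies in some $M_{n,j}$, so $\tau_{M_{n,j}}(x)=y$; in particular $Gx=X$, so the action is transitive and the ballean $\BB(G,\mathfrak{F}_G,X)=(X,\mathfrak{F}_G,B)$, $B(x,A)=Ax\cup\{x\}$, is well defined. The same remark shows that, with the finite set $A_n=\{\tau_{n,1},\dots,\tau_{n,k_n}\}\in\mathfrak{F}_G$, we have $B_d(x,n)\subseteq A_n x\cup\{x\}=B(x,A_n)$ for every $x\in X$ and every $n$; this is the first $\prec$-condition for the identity map. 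For the second, note that each generator $\tau_{M_{n,j}}$ displaces every point by at most $n$, since the two endpoints of any edge it swaps are $\Gamma_n$-adjacent; therefore, by the triangle inequality, every $g\in G$ --- being a product of finitely many generators --- displaces every point by at most some $\lambda(g)\in\w$. Consequently, for a finite set $A=\{g_1,\dots,g_r\}\subseteq G$ and $n=\max_i\lambda(g_i)$ we get $B(x,A)=Ax\cup\{x\}\subseteq B_d(x,n)$ for all $x$. Thus the identity map and its inverse are both $\prec$-mappings, and $(X,d)$ is asymorphic to $\BB(G,\mathfrak{F}_G,X)$.

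The only step that is not routine bookkeeping is the decomposition of the bounded-degree graphs $\Gamma_n$ into finitely many matchings, which I expect to be the main point to verify; for countable graphs of bounded degree it follows at once from the greedy colouring argument. Everything else --- the countability of $X$ and of $G$, the stability of ``bounded displacement'' under finite products, and the verification of the two $\prec$-conditions --- is straightforward.
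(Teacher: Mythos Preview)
Your argument is correct. The edge-colouring of each bounded-degree graph $\Gamma_n$ into finitely many matchings, the passage from matchings to involutions, the resulting transitivity, and the two $\prec$-conditions are all checked properly; the only minor wrinkle is the harmless notational slip between $\tau_{M_{n,j}}$ and $\tau_{n,j}$.

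As for comparison: the paper does not prove this theorem at all --- it is stated with the attribution ``from \cite[Theorem~1]{b8}'' and no argument is given in the present text. Your construction (generate $G$ by the involutions coming from matching decompositions of the distance-$n$ graphs) is the standard one and is, in fact, essentially the proof in \cite{b8}; so there is nothing substantively different to report, only that you have supplied what the paper merely cites.
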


We say that a ballean $\BB=(X,P,B)$ is {\em thin} if, for every $\alpha\in P$, there exists a bounded subset $V$ of $X$ such that $B(x,\alpha)=\{x\}$ for each $x\in X\setminus V$. A subset $Y\subseteq X$ is called thin if the subballean $\BB_Y$ is thin.

We use the natural preordering $<$ on $P$: $\alpha<\beta$ if and only if $B(x,\alpha)\subseteq B(x,\beta)$ for each $x\in X$. We say that a subset $Y\subseteq X$ has {\em asymptotically $\alpha$-isolated balls} if, for every $\beta>\alpha$, there exists $y\in Y$ such that $B(y,\beta)\setminus B(y,\alpha)=\varnothing$. If $Y$ has asymptotically $\alpha$-isolated balls for some $\alpha\in P$, we say that $Y$ has {\em asymptotically isolated balls}.

A ballean $\BB=(X,P,B)$ is called {\em asymptotically scattered} if each unbounded subset of $X$ has asymptotically isolated balls. A subset $Y\subseteq X$ is called {\em asymptotically scattered} if the subballean $\BB_Y$ is asymptotically scattered.

\section{Scattered subsets of a group} ~\label{s6}
In this section, we consider a group $G$ as a $\BB(G,\mathfrak{F}_G,X)$ where $X=G$ and $G$ acts on $X$ by left translation. We remind that, for $g\in G$ and $F\in\mathfrak{F}_G$, $$B(g,F)=Fg\cup\{g\}.$$

A subset $A$ of a group $G$ is called {\em sparse} \cite{b3}, if for every infinite subset $Y$ of $G$, there exists a finite subset $F\subset Y$ such that $\bigcap_{g\in F}gA$ is finite.
\begin{Th} For a subset $A$ of a countable group $G$, the following statements are equivalent

$(i)$ $A$ is asymptotically scattered;

$(ii)$ $A$ is sparse;

$(iii)$ for each free ultrafilter $\UU$ on $G$ with $A\in\UU$, the set $\{g\in G:A\in g\UU\}$ is finite, where $g\UU=\{gU:U\in\UU\}$.
\end{Th}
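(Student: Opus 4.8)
The plan is to prove the cycle of implications $(i)\Rightarrow(ii)\Rightarrow(iii)\Rightarrow(i)$, unwinding the ballean definitions into combinatorial statements about left translates.

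\textbf{From $(i)$ to $(ii)$.}
First I would recall that in the ballean $\BB(G,\mathfrak F_G,G)$ the ball $B(g,F)=Fg\cup\{g\}$, the radii are finite subsets of $G$ ordered by inclusion, a subset is bounded iff it is finite, and a subset $Y\subseteq G$ has asymptotically isolated balls iff there is a finite $F_0$ such that for every finite $F\supseteq F_0$ there is $y\in Y$ with $(F_0\cup F)y\subseteq Y$. Now suppose $A$ is not sparse: there is an infinite $Y\subseteq G$ such that for every finite $F\subset Y$ the set $\bigcap_{g\in F}gA$ is infinite. I would first replace $A$ by $Y':=\bigcap_{g}gA$-type intersections to extract an unbounded subset $Z$ of $A$ witnessing the failure of asymptotically isolated balls. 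Concretely: enumerate $Y=\{g_0,g_1,\dots\}$; using that all finite intersections $\bigcap_{i\le n}g_iA$ are infinite, build an injective sequence $(a_n)$ in $A$ with $a_n\in\bigcap_{i\le n}g_iA$, i.e.\ $g_i^{-1}a_n\in A$ for all $i\le n$; put $Z=\{a_n:n\in\w\}\subseteq A$. Then for any finite radius $F\subseteq G$, pick $n$ with $\{g_i^{-1}:i\in\text{(indices naming }F)\}\subseteq\{g_0^{-1},\dots,g_n^{-1}\}$; for the element $a_n$ the ball $B_Z(a_n,F)=Z\cap(Fa_n\cup\{a_n\})$ need not be a singleton. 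The cleanest route is: apply $(i)$ to $Z$ to get a radius $F_0$ and an element $z\in Z$ with $B_Z(z,F_0\cup F)=B_Z(z,F_0)$ for a cleverly chosen $F$, and derive a contradiction with the infiniteness of the intersections. I expect this direction, i.e.\ turning ``asymptotically isolated balls fail'' into the single infinite $Y$ demanded by sparseness, to be the main obstacle, because one has to juggle the preordering on finite radii against the group structure; the key trick is that $g^{-1}a\in A$ for many $g$ simultaneously forces the $F$-ball of $a$ inside $A$ to be large.

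\textbf{From $(ii)$ to $(iii)$.}
This is the standard ultrafilter translation. Suppose $A$ is sparse and let $\UU$ be a free ultrafilter with $A\in\UU$. Set $Y=\{g\in G: A\in g\UU\}$; note $A\in g\UU$ means $g^{-1}A\in\UU$. If $Y$ were infinite, then for every finite $F\subset Y$ we have $\bigcap_{g\in F}g^{-1}A\in\UU$ (finite intersection of members of the ultrafilter), hence that set is infinite because $\UU$ is free. Replacing $A$ by $A^{-1}$ at the level of indexing — more precisely observing $\bigcap_{g\in F}gA$ versus $\bigcap_{g\in F}g^{-1}A$ and using that $F\subset Y$ iff $F^{-1}\subset Y^{-1}$ with the analogous property — contradicts sparseness of $A$ applied to the infinite set $Y^{-1}$ (or a direct reformulation: sparseness says some finite $F\subset Y^{-1}$ makes $\bigcap_{g\in F}gA$ finite, but that intersection lies in $\UU$). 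So $Y$ is finite.

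\textbf{From $(iii)$ to $(i)$.}
I would prove the contrapositive. Suppose $A$ is not asymptotically scattered: there is an unbounded (hence infinite) $Z\subseteq A$ with no asymptotically isolated balls, meaning for every finite $F_0\subseteq G$ there is a finite $F\supseteq F_0$ such that for all $z\in Z$, $B_Z(z,F)\setminus B_Z(z,F_0)\ne\varnothing$, i.e.\ the balls keep growing. Unwinding, for every finite $F_0$ and cofinitely... — here I would extract, by a diagonal/compactness argument over the countable group, a free ultrafilter $\UU$ concentrated on $Z$ such that infinitely many left translates $g\UU$ still contain $Z$ (hence $A$): the failure of asymptotically isolated balls gives, for each $n$, group elements building up unboundedly many $g$ with $g^{-1}Z$ large on $Z$, and one takes an ultrafilter limit. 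Then $\{g:A\in g\UU\}\supseteq\{g:Z\in g\UU\}$ is infinite, contradicting $(iii)$. The technical care here is identical in spirit to the $(i)\Rightarrow(ii)$ argument, so once that is done this direction follows by the same bookkeeping together with the correspondence between sparseness and the ultrafilter condition already established in $(ii)\Leftrightarrow(iii)$; indeed it suffices to show $\neg(i)\Rightarrow\neg(ii)$, which is just the converse of the first implication, so in practice I would prove $(i)\Leftrightarrow(ii)$ as one biconditional and then close the loop with $(ii)\Leftrightarrow(iii)$.
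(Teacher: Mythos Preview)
Your cycle runs in the opposite direction to the paper's: you attempt $(i)\Rightarrow(ii)\Rightarrow(iii)\Rightarrow(i)$, while the paper proves $(i)\Rightarrow(iii)\Rightarrow(ii)\Rightarrow(i)$, each step by contrapositive. Your middle step $(ii)\Rightarrow(iii)$ is correct and is exactly the mirror image of the paper's $(iii)\Rightarrow(ii)$. The other two steps, however, are genuinely harder in the direction you chose, and your sketches do not close them.

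For $(i)\Rightarrow(ii)$ you argue the contrapositive: from an infinite $Y=\{g_n\}$ with all finite intersections $\bigcap_{i\le n}g_iA$ infinite you build $a_n\in\bigcap_{i\le n}g_iA$ and set $Z=\{a_n\}$. But $Z$ need not witness the failure of asymptotically isolated balls in $\BB_A$. Given a radius $F_0$, you can pick $g_N$ with $g_N^{-1}\notin F_0\cup\{e\}$ and then $g_N^{-1}a_n\in A\setminus(F_0a_n\cup\{a_n\})$ \emph{only for $n\ge N$}; for the finitely many $a_0,\dots,a_{N-1}$ you have no control, and you cannot discard them in advance because $N$ depends on $F_0$. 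Your text flags this as ``the main obstacle'' but offers no resolution. The paper sidesteps the issue entirely by proving the reverse implication $\neg(i)\Rightarrow\neg(ii)$: from a subset with no isolated balls one extracts injective sequences $(x_n)$ in $A$ and $(y_n)$ in $G$ with $y_n\{x_n,x_{n+1},\dots\}\subseteq A$, whence each $\bigcap_{i\le n}y_i^{-1}A$ is infinite. That direction is short because the ``tail'' structure is built in.

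For $(iii)\Rightarrow(i)$ you sketch a diagonal extraction of an ultrafilter but then retreat to ``prove $(i)\Leftrightarrow(ii)$ and $(ii)\Leftrightarrow(iii)$''. That now requires the two implications $\neg(i)\Rightarrow\neg(ii)$ and $\neg(ii)\Rightarrow\neg(iii)$, neither of which you have argued; calling $\neg(i)\Rightarrow\neg(ii)$ ``the converse of the first implication'' does not make it follow from the same argument. In the paper's orientation this step becomes $\neg(iii)\Rightarrow\neg(i)$: given infinitely many $g_n$ with $A\in g_n\UU$, a decreasing choice of $U_n\in\UU$ with $g_nU_n\subseteq A$ and points $x_n\in U_n\setminus U_{n-1}$ immediately yields a subset of $A$ with no isolated balls.

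A smaller point: your unwinding ``$(F_0\cup F)y\subseteq Y$'' is not the condition for isolated balls; in $\BB_A$ one needs $A\cap Fy\subseteq F_0y\cup\{y\}$, i.e.\ $B_A(y,F)\setminus B_A(y,F_0)=\varnothing$.
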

\begin{proof}
$(i)\Rightarrow(iii)$. Assume that there exists an infinite subset $\{g_n:n\in\w\}$ of $G$ such that $A\in g_n\UU$ for each $n\in\w$. We choose a decreasing sequence $U_n$ of members of $\UU$ such that $g_nU_n\subseteq A$, $U_n\subseteq A$ and pick $x_n\in U_n\setminus U_{n-1}$. Then $g_n\{x_n,x_{n+1},...\}\subseteq A$ for each $n\in\w$. It follows that the set $\{x_n:n>0\}$ has no isolated balls in $A$, so $A$ is not asymptotically scattered.

$(iii)\Rightarrow(ii)$. Suppose that $A$ is not sparse and choose a subset $Y=\{y_n:n\in\w\}$ of $G$ such that $\bigcap_{0\leq i\leq n} y_iA$ is infinite for each $n\in\w$. We take an arbitrary free ultrafilter $\VV$ on $G$ such that $\{y_nA:n\in\w\}\subset\VV$, put $\UU=y_0^{-1}\VV$ and note that $A\in\UU$ and $A\in y_n^{-1}y_0\UU$ for each $n>1$, so $(iii)$ does not hold.

$(ii)\Rightarrow(i)$. If $A$ is not asymptotically scattered, we can choose an injective sequence $(x_n)_{n\in\w}$ in $A$ and an injective sequence $(y_n)_{n\in\w}$ in $G$ such that $y_n\{x_n.x_{n+1},...\}\subseteq A$ for each $n\in\w$. Then $\bigcap_{0\leq i\leq n} y_i^{-1}A$ is infinite for each $n\in\w$, so $A$ is not scattered.
\end{proof}
\begin{Th} Every countable group $G$ contains an asymptotically scattered subset of infinite type and, for each $m\in\w$, an asymptotically scattered subset of type $m$.\end{Th}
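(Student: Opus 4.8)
The plan is to reduce everything, via the preceding theorem (asymptotically scattered $\Leftrightarrow$ sparse), to producing sparse subsets of $G$ of each finite type and of infinite type, and to obtain these by realizing inside $G$, up to coarse equivalence, the spaces $W(X_n)_{n\in\w}$ of Section~\ref{s4}. A finite group has no unbounded subsets, so assume $G$ infinite; since $G$ is countable I would, by the results of Section~\ref{s5}, replace the ballean $\BB(G,\mathfrak{F}_G,G)$ by an asymorphic uniformly locally finite metric space, so that the metric theorems of Sections~\ref{s2}--\ref{s4} apply. For $m=0$ a point suffices. For $m\ge1$, the construction of Section~\ref{s4} provides a sequence $(X_n)_{n\in\w}$ of infinite subsets of $\w$ with $\min X_n>n$ for $n>0$ and with the appropriate pattern of finite and infinite finite intersections, so that $W(X_n)_{n\in\w}$ is sparse and of type exactly $m$; since sparseness and type are coarse invariants, it is enough to find a subset of $G$ coarsely equivalent to $W(X_n)_{n\in\w}$.

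To build the embedding I would use one auxiliary sequence. Fix an enumeration $G=\{g_n:n\in\w\}$ and construct recursively an injective sequence $(c_n)_{n\in\w}$ in $G$ that is \emph{generic}: at stage $n$ choose $c_n$ outside a finite set $\Phi_n\subseteq G$, chosen large enough that for every $\ell\in\w$ no two distinct reduced words of length $\le\ell$ over the alphabet $\{c_i^{\pm1}:i\ge\ell\}$ take equal values in $G$. At each stage only finitely many elements are barred, so the recursion runs; the one delicate point, that a generator may occur several times inside a word (so one cannot simply solve for it), is absorbed by enlarging $\Phi_n$ by finitely many more elements. The property used afterwards is: for every finite $F\subseteq G$ and every $\ell$, only finitely many tuples $(i_1,\dots,i_r)$ with $r\le\ell$ satisfy $c_{i_1}^{\pm1}\cdots c_{i_r}^{\pm1}\in F$. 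Identifying $W_2$ with the subspace $\{0\}\cup\{e_k:k\ge1\}\cup\{e_n+e_k:k>n\ge1\}$ of $2^{<\NN}$, I would then set, for the $(X_n)_{n\in\w}$ fixed above for a given $m\ge1$,
$$A_m=\{c_nc_k:n\ge1,\ k\in X_n\}\cup\{c_k:k\in X_0\}\subseteq G.$$
Genericity forces the subballean induced on $A_m$ to be asymorphic to the corresponding subspace of $2^{<\NN}$, i.e. to $W(X_n)_{n\in\w}$: once the $c$-indices exceed any prescribed bound, $c_nc_k(c_{n'}c_{k'})^{-1}$ and its analogues lie in a given finite subset of $G$ exactly when $\max\bigl(\{n,k\}\triangle\{n',k'\}\bigr)$ remains bounded, and a generator cancels precisely along the coordinate shared by two points --- which is the macro-cube metric. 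For infinite type I would split $\{c_n:n\in\w\}=\bigsqcup_{m\ge1}C^{(m)}$ into infinite blocks, build for each $m$ a copy $D_m$ of a type-$m$ space using only generators from $C^{(m)}$, and put $A_\infty=\bigsqcup_{m\ge1}D_m$.

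Then I would verify the properties. That $A_m$ (and each $D_m$) is sparse: were it not, there would be injective sequences $(x_j)$ in $A_m$ and $(y_j)$ in $G$ with $y_jx_i\in A_m$ for all $i\ge j$; since each $x_i$ is a word of length $\le2$ in the $c$'s, this would express the fixed element $y_j$ as a bounded-length word in the $c$'s in infinitely many ways, contradicting genericity (alternatively, transport the sparseness of $W(X_n)_{n\in\w}$ along the asymorphism). That $A_m$ has type exactly $m$ follows from coarse invariance of type together with the computation of Section~\ref{s4}. For $A_\infty$: it is asymptotically scattered because in a putative witnessing pair the sequence $(x_j)$ either lies eventually in one block $D_m$ --- contradicting that $D_m$ is sparse --- or meets infinitely many blocks, in which case for all but finitely many $i$ the product $y_jx_i$ mixes generators from two of the \emph{disjoint} sets $C^{(m)}$ and so cannot lie in $A_\infty$, again by genericity; and $A_\infty$ has infinite type because a subset of a coarsely thin space is coarsely thin (Theorem~\ref{t32}), so if $A_\infty$ were a union of $m$ coarsely thin subsets then so would be its subset $D_{m+1}$, contradicting that $D_{m+1}$ has type $m+1$.

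The hard part will be the generic construction and proving that it genuinely controls the induced ball structure of the word-sets against the defining relations of $G$: one must bar, stage by stage, every short-word coincidence that would create spurious balls (lowering the type or destroying sparseness) or suppress the expected ones (raising the type), check that only finitely many elements need be barred each time, and confirm that in the limit the word metric on $A_m$ reproduces the macro-cube metric of $W(X_n)_{n\in\w}$. This is largely bookkeeping rather than a conceptual difficulty, but it is where the real work lies; the remainder is transfer along coarse equivalence using Sections~\ref{s2}--\ref{s4}.
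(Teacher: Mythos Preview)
Your approach is genuinely different from the paper's. The paper does not embed the model spaces $W(X_n)_{n\in\w}$ into $G$; instead it simply quotes two explicit constructions from \cite{b3} of sparse subsets $A=\bigcup_{n\in\w}F_nx_n$ of $G$ (one with $|F_n|$ unbounded, one with $|F_n|\equiv m$), together with separation properties of the pieces $F_nx_n$ that directly show no large subset of $A$ is $m$-thin (respectively $(m-1)$-thin). So the paper's proof is a citation plus a short verification against the definition of type, while yours is a self-contained embedding argument.

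There is, however, a real gap in your genericity step. The property you ask for --- that for each $\ell$ no two distinct reduced words of length $\le\ell$ in $\{c_i^{\pm1}:i\ge\ell\}$ coincide in $G$, or the weaker ``property used afterwards'' that only finitely many bounded-length words land in any fixed finite set --- cannot be achieved in a group with relations. In any abelian $G$ (say $G=\ZZ$) the commutator $c_ic_jc_i^{-1}c_j^{-1}$ is a reduced word of length $4$ equal to $e$ for \emph{every} pair $i\neq j$, so infinitely many reduced length-$4$ tuples hit $\{e\}$ no matter how the $c_n$ are chosen; enlarging $\Phi_n$ cannot avoid an identity that holds universally. This breaks the recursion as written, and with it the sparseness argument for $A_m$ (``$y_j$ would have infinitely many bounded-length representations'') and the block-separation argument for $A_\infty$.

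The embedding idea is salvageable: what you actually need is not freeness of the $c_n$ but only that the specific products $c_nc_k(c_{n'}c_{k'})^{-1}$ (and their degenerations with one factor $c_k$) escape every fixed finite set as $\max\{n,k,n',k'\}\to\infty$ unless the appropriate indices coincide. This weaker condition \emph{can} be arranged recursively in any infinite group --- in $\ZZ$ already $c_n=4^n$ works --- but isolating the right finite list of coincidences to bar at each stage, and then verifying that the resulting map to $W(X_n)_{n\in\w}$ is an asymorphism and that the blocks $D_m$ are asymptotically disjoint, is a genuine construction rather than bookkeeping. In effect you would be redoing, in a different language, the work the paper imports from \cite{b3}.
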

\begin{proof}
We write $G$ as a union $G=\bigcup_{n\in\w}K_n$, $K_0=\{e\}$ of an increasing chain of finite symmetric subsets.

In the proof of Theorem $2.1$ from \cite{b3}, we constructed a sparse subset $A$ of the form $A=\bigcup_{n\in\w} F_nx_n$ where $(F_n)_{n\in\w}$ is an appropriate sequence in $\mathfrak{F}_G$. By the construction, $A$ has the following property

$(1)$ for all $n>0$ and $m>0$, there exists $F\in\mathfrak{F}_G$ and an infinite subset $I\subset\w$ such that $|F|=m+1$, $F=F_n$ for each $n\in I$ and $K_nxx_i\cap A=\{xx_i\}$ for all $x\in F$ and $i\in I$.

We fix $m>0$, take an arbitrary large subset $L\subseteq A$ and pick $n\in\w$ such that $A\subseteq K_nL$. By $(1)$, $F_ix_i\subset L$ for each $i\in I$. Since $F=F_i$ and $|F|=m+1$, we conclude that $L$ is not $m$-thin. Hence $A$ is of infinite type.

In the proof of Theorem 1.1 \cite{b3}, for each $m>0$, we constructed a sparse subset $A=\bigcup_{n\in\w}F_nx_n$ with following properties

$(2)$ $|F_i|=m$, $K_iF_ix_i\cap K_nF_nx_n=\varnothing$, $0\leq i<n<\w$;

$(3)$ for each $n\in\w$, there exist $F\in\mathfrak{F}_G$ and an infinite subset $I\subset\w$ such that $F=F_i$ and $K_nxx_i\cap A=\{xx_i\}$ for all $i\in I$ and $x\in F$.

By $(2)$, $A$ is $m$-thin. Let $L$ be a large subset of $A$. We pick $n\in\w$ such that $A\subseteq K_nL$. By $(3)$, $F_ix_i\subset L$ for each $i\in I$. Since $F=F_i$ and $|F|=m$, we conclude that $L$ is not $(m-1)$-thin. Hence $A$ is of type $m$.
\end{proof}

Department of Cybernetics

Kyiv University

Volodimirska 64

01033 Kyiv

Ukraine

i.v.protasov@gmail.com 
\end{document}